\newtheorem{thm}{Theorem}
\newtheorem{mainthm}{Theorem}
\newtheorem{lem}[thm]{Lemma}
\newtheorem{prop}[thm]{Proposition}
\theoremstyle{definition} %The commands below have bold title, standard text
\newtheorem{deftn}[thm]{Definition}
\theoremstyle{remark}
\newtheorem{example}[thm]{Example}
\newtheorem{rmk}[thm]{Remark}
\newcommand{\dmo}{\DeclareMathOperator}
\newcommand{\R}{\mathbb{R}}
\newcommand{\Q}{\mathbb{Q}}
\newcommand{\co}{\mathbb{C}}\newcommand{\Z}{\mathbb{Z}}
\newcommand{\A}{\mathbb{A}}
\newcommand{\De}{\Delta}
\newcommand{\Ga}{\Gamma}
\newcommand{\wtil}{\widetilde}
\newcommand{\sbs}{\subset}\newcommand{\bs}{\backslash}
\newcommand{\xra}{\xrightarrow}
\newcommand{\ra}{\rightarrow}
\newcommand{\hra}{\hookrightarrow}
\newcommand{\bb}[1]{\mathbb{#1}}\newcommand{\ca}[1]{\mathcal{#1}}\newcommand{\mf}{\mathfrak}
\newcommand{\fr}[2]{\frac{#1}{#2}}
\newcommand{\ti}{\times}
\newcommand{\lra}{\longrightarrow}
\dmo{\sgn}{sign}
\dmo{\we}{\wedge}
\dmo{\ind}{ind}\dmo{\Ind}{Ind}
\dmo{\bop}{\bigoplus}\dmo{\pic}{Pic}
\dmo{\coker}{coker}\dmo{\vol}{Vol}\dmo{\gal}{Gal}\dmo{\perm}{Perm}
\dmo{\tor}{Tor}\dmo{\ext}{Ext}\dmo{\Ext}{Ext}
\dmo{\aut}{aut}
\dmo{\Aut}{Aut}
\dmo{\inn}{Inn}\dmo{\var}{Var}
\dmo{\dep}{depth}
\dmo{\ad}{ad}\dmo{\curl}{curl}
\dmo{\hy}{\bb H}\dmo{\Sl}{SL}
\dmo{\SO}{SO}\dmo{\psl}{PSL}
\dmo{\isom}{Isom}\dmo{\Isom}{Isom}
\dmo{\conf}{Conf}
\dmo{\stab}{Stab}\dmo{\Jac}{Jac }
\dmo{\diam}{diam}\dmo{\fix}{Fixed}\dmo{\Fix}{Fix}
\dmo{\injR}{injRad}\dmo{\Ad}{Ad}
\dmo{\esv}{ess-vol}\dmo{\out}{Out}\dmo{\Out}{Out}
\dmo{\nil}{Nil}\dmo{\sol}{Sol}
\dmo{\Div}{div}
\dmo{\SU}{SU}
\dmo{\SP}{SP}
\dmo{\Sp}{Sp}
\dmo{\SL}{SL}
\dmo{\rk}{rk}
\dmo{\rank}{rank}
\dmo{\psp}{PSp}\dmo{\psu}{PSU}
\dmo{\PU}{PU}\dmo{\pgl}{PGL}
\dmo{\Mod}{Mod}\dmo{\range}{Range}
\dmo{\eu}{eu}\dmo{\mi}{mi}
\dmo{\Log}{Log}\dmo{\supp}{supp}
\dmo{\maps}{Maps}\dmo{\Gr}{Gr}
\dmo{\Pin}{Pin}
\dmo{\Spin}{Spin}\dmo{\Str}{Str}
\dmo{\Sq}{Sq}\dmo{\Symp}{Symp}
\dmo{\pd}{PD}\dmo{\PD}{PD}\dmo{\sig}{Sig}
\dmo{\Set}{Set}\dmo{\Top}{Top}
\dmo{\ev}{ev}\dmo{\St}{St}
\dmo{\Pt}{Pt}\dmo{\pt}{pt}
\dmo{\colim}{colim }\dmo{\Pl}{PL}
\dmo{\String}{String}\dmo{\smear}{smear}
\dmo{\dev}{dev}
\dmo{\met}{Met}\dmo{\contact}{Contact}
\dmo{\teich}{Teich}\dmo{\Teich}{Teich}\dmo{\qi}{QI}
\dmo{\der}{Der}
\dmo{\cl}{Cliff}\dmo{\Cl}{Cl}
\dmo{\Pf}{Pf}
\dmo{\ch}{ch}\dmo{\diag}{diag}
\dmo{\grad}{grad}\dmo{\Char}{char}
\dmo{\spec}{Spec}\dmo{\Arg}{Arg}
\dmo{\rad}{rad}\dmo{\im}{Im}
\dmo{\Hom}{Hom}\dmo{\End}{End}
\dmo{\tr}{tr}\dmo{\id}{Id}
\dmo{\gl}{GL}
\dmo{\sym}{Sym}\dmo{\Sym}{Sym}
\dmo{\com}{Comm}
\dmo{\Lk}{Lk}
\dmo{\CAT}{CAT}
\dmo{\Rep}{Rep}
\dmo{\Conf}{Conf}
\dmo{\PConf}{PConf}
\dmo{\Push}{Push}
\dmo{\Cont}{Cont}
\dmo{\sm}{\setminus}
\dmo{\vn}{\varnothing}
\dmo{\disk}{\mathbb D}
\dmo{\Trd}{Trd}\dmo{\Mat}{Mat}
\dmo{\Riem}{Riem}
\dmo{\Diffn}{\Diff_0}\dmo{\diff}{diff}
\dmo{\Diff}{Diff}\dmo{\homeo}{Homeo}
\dmo{\Homeo}{Homeo}\dmo{\Fr}{Fr}
\dmo{\rot}{rot}\dmo{\Emb}{Emb}
\dmo{\Ham}{Ham}\dmo{\Met}{Met}
\dmo{\Ein}{Ein}\dmo{\CP}{\co P}
\dmo{\Per}{Per}\dmo{\Ric}{Ric}
\newcommand{\C}{\mathbb C}\dmo{\Nrd}{Nrd}
\dmo{\Comp}{Comp}\dmo{\PSC}{PSC}
\dmo{\Cent}{Cent}\dmo{\Orb}{Orb}
\dmo{\aind}{a-ind}\dmo{\tind}{t-ind}
\dmo{\constant}{constant}
\dmo{\Td}{Td}
\dmo{\LMod}{LMod}
\dmo{\SMod}{SMod}
\dmo{\SDiff}{SDiff}
\dmo{\Br}{Br}
\dmo{\csch}{csch}
\dmo{\triv}{triv}
\dmo{\genus}{genus}
\dmo{\Homeq}{HomEq}
\dmo{\PP}{\mathbb{P}}
\dmo{\U}{U}
\dmo{\Eis}{Eis}
\dmo{\Gal}{Gal}
\dmo{\BDiff}{\wtil{\Diff}}
\dmo{\BAut}{\wtil{\Aut}}
\dmo{\Iso}{Iso}
\dmo{\codim}{codim}
\dmo{\II}{II}
\dmo{\I}{I}
\dmo{\GL}{GL}
\dmo{\ph}{ph}
\dmo{\Stab}{Stab}
\dmo{\OO}{O}
\begin{document}

\title[Characteristic classes of bundles of K3 manifolds]{Characteristic classes of bundles of K3 manifolds and the
  Nielsen Realization problem}

\author{Jeffrey Giansiracusa}
\address{Department of Mathematics, Swansea University, Singleton Park, Swansea SA2 8PP, UK} \email{j.h.giansiracusa@swansea.ac.uk}

\author{Alexander Kupers}
\address{Department of Mathematics, Harvard University, Cambridge, MA 02138} \email{kupers@math.harvard.edu}

\author{Bena Tshishiku}
\address{Department of Mathematics, Harvard University, Cambridge, MA 02138} \email{tshishikub@gmail.com}

\subjclass[2000]{19J35, 57R20, 14J28, 11F75}

\date{\today}

\keywords{Characteristic classes, K3 surfaces, arithmetic groups, cohomology}

\begin{abstract} 
Let $K$ be the K3 manifold. In this note, we discuss two
methods to prove that certain generalized Miller--Morita--Mumford classes for smooth bundles with fiber $K$ are non-zero. As a consequence, we fill a gap in a paper of the first author, and prove that the homomorphism $\Diff(K)\ra\pi_0\Diff(K)$ does not split. One of the two methods of proof uses a result of Franke on the stable cohomology of arithmetic groups that strengthens work of Borel, and may be of independent interest.
\end{abstract}

\maketitle

\section{Introduction}

In this paper $K$ denotes the \emph{$K3$ manifold}, which is the underlying oriented manifold of a complex $K3$ surface. This uniquely specifies its diffeomorphism type, and one may construct it as the hypersurface in $\mathbb{C} P^3$ cut out by the homogeneous equation $z_0^4+z_1^4+z_2^4+z_3^4 = 0$. For each element $c \in H^i(B\SO(4);\Q)$, there is a characteristic class $\kappa_c$ of smooth oriented manifold bundles with fiber $K$, called a \emph{generalized Miller--Morita--Mumford class}: given such a bundle $E \to B$ we take the vertical tangent bundle $T_v E$ and integrate the class $c(T_v E) \in H^i(E;\Q)$ over the fibers to get $\kappa_c(E) \in H^{i-4}(B;\Q)$.

Let $\Diff(K)$ denote the group of orientation-preserving $C^2$-diffeomorphisms, in the $C^2$-topology. Its classifying space $B\Diff(K)$ carries a universal smooth manifold bundle with fiber $K$, and hence there are classes $\kappa_c \in H^*(B\Diff(K);\Q)$ which may or may not be zero. Letting $\mathcal{L}_2 = \frac{1}{45}(7 p_2 -p_1^2)$ denote the second Hirzebruch $L$-polynomial, we prove the following: 

\begin{mainthm}\label{thm:hirzebruch} The generalized Miller--Morita--Mumford-class $\kappa_{\mathcal{L}_2} \in H^4(B\Diff(K);\Q)$ is nonzero. 
\end{mainthm}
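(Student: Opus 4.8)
The plan is to exhibit a single smooth oriented bundle $E \to B$ with fiber $K$ on which $\kappa_{\mathcal{L}_2}$ evaluates nontrivially, and the natural source of such bundles is algebraic geometry: families of complex $K3$ surfaces over a projective base. First I would recall that for a smooth projective family $\pi\colon \mathcal{X} \to B$ of $K3$ surfaces, the vertical tangent bundle $T_v E$ is (up to the choice of almost complex structure, which does not affect rational Pontryagin classes) the underlying real bundle of the relative holomorphic tangent bundle $T_\pi$, so $\mathcal{L}_2(T_vE)$ is computed from the Chern classes $c_1(T_\pi), c_2(T_\pi)$. Since $K3$ surfaces have trivial canonical bundle, for a single $K3$ surface $c_1 = 0$ and $c_2 = 24$; in a family the class $c_1(T_\pi)$ need not vanish but restricts to $0$ on each fiber, and this is exactly what makes the fiber integral potentially nonzero. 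The key computation is then the Grothendieck--Riemann--Roch / Mumford-type identity expressing $\pi_!\big(\mathcal{L}_2(T_\pi)\big)$, equivalently $\kappa_{\mathcal{L}_2}(E)$, in terms of tautological classes of the family, and one checks this is a nonzero multiple of something like $\pi_!(c_1(T_\pi)^2)$ plus lower-order terms, so it suffices to find a family where $\pi_!(c_1(T_\pi)^2) \neq 0$.

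Concretely, I would take $B$ to be a smooth projective curve or surface and $\mathcal{X} \to B$ a Lefschetz-type pencil or a well-chosen family of quartic surfaces in $\CP^3$ (or an elliptic $K3$ fibration viewed in a family), and compute the relevant Chern numbers using the adjunction formula and the projection formula. An alternative, cleaner route is to use a product-type or moduli construction: the period domain for $K3$ surfaces is an arithmetic quotient, and over (a finite cover of) the moduli space of polarized $K3$ surfaces there is a universal family whose associated $\kappa$-classes are pulled back from $H^*$ of an arithmetic group $\OO(2,19)$; here one can hope to invoke the Borel/Franke stable cohomology results alluded to in the abstract to see that the relevant degree-$4$ class is nonzero. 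Either way, the upshot is to reduce Theorem~\ref{thm:hirzebruch} to a nonvanishing statement about a concrete cohomology class — either a Chern number of an explicit algebraic family, or a generator of the stable cohomology of an arithmetic lattice.

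The main obstacle I anticipate is not the Riemann--Roch bookkeeping but producing a family where the leading term genuinely does not vanish and is not cancelled: because the fibers are $K3$ surfaces all of whose own characteristic numbers are fixed, $\kappa_{\mathcal{L}_2}(E)$ is governed entirely by how the Hodge/Pontryagin data varies over $B$, so one needs a family with enough monodromy or enough variation of the $(2,0)$-part. This is precisely where the arithmetic-group input is valuable: rather than guessing an explicit family and grinding out its Chern numbers (which risks an accidental zero), one argues on the universal family over the period domain and uses that the stable cohomology of $\OO(2,19)$, by Franke's theorem strengthening Borel, contains the class detecting $\kappa_{\mathcal{L}_2}$. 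So I expect the real content to be (i) identifying $\kappa_{\mathcal{L}_2}$ with a specific class in $H^4(B\OO(2,19);\Q)$ via the period map, and (ii) citing the stable cohomology computation to conclude that class is nonzero; the first is a characteristic-class manipulation and the second is the ``independent interest'' result the abstract advertises.
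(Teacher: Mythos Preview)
Your two proposed routes are exactly the two proofs the paper gives, and your instinct that the arithmetic-group route is where the ``real content'' lies is also how the paper is organized. Two corrections, however. First, in the explicit-family approach your suggested leading term $\pi_!(c_1(T_\pi)^2)$ is zero for degree reasons: $c_1^2$ has degree $4$ and fiber-integration over a $4$-manifold lands in degree $0$, where it computes $\int_K c_1^2 = 0$. The actual computation (over the moduli of quasi-polarized $K3$ surfaces $\mathcal{M}_{2d}$) uses $t_1 = \pi^*\lambda$ and Grothendieck--Riemann--Roch to evaluate $\int_\pi t_1^a t_2^b$ for $a+2b=4$; one gets $\kappa_{\mathcal{L}_2} = 8\lambda^2$, and the nonvanishing input is the theorem of van der Geer--Katsura and Petersen that $\lambda^{17}\neq 0$ in $H^*(\mathcal{M}_{2d};\Q)$. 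Second, the arithmetic group in the paper's second proof is $\OO(3,19)$, not $\OO(2,19)$: the paper works with the full action of $\Diff(K)$ on $H_2(K;\Z)$ (signature $(3,19)$) and uses the moduli of \emph{Einstein} metrics rather than the polarized period map. Franke's injectivity of the Matsushima map then shows the relevant class $x_4 \in H^4(B\Gamma_{\Ein};\C)$ is nonzero, and the global Torelli theorem for Einstein metrics lifts this to $B\Diff(K)$. Your $\OO(2,19)$ picture is really the first proof in disguise, since $\mathcal{M}_{2d}$ is an arithmetic quotient for that group.
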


The Hirzebruch L-polynomials are related to signatures of
manifolds and as a corollary of Theorem \ref{thm:hirzebruch}, there
exists a smooth bundle of $K3$ manifolds over a closed stably-framed
4-manifold whose total space has nonzero signature. We shall give two
proofs of Theorem \ref{thm:hirzebruch}: the first is an explicit
calculation for the tautological bundle over a certain moduli space of $K3$ surfaces, while the second combines the study of Einstein metrics with a general result about cohomology of arithmetic groups following work of Franke.

Either proof can be combined with the Bott vanishing theorem to prove the following result. We define the mapping class group $\Mod(K)$ to be the group $\pi_0\Diff(K)$ of path components of $\Diff(K)$.

\begin{mainthm}\label{thm:nielsen}
The surjection  $p \colon \Diff(K) \to \Mod(K)$ does not split, i.e.\ there is no homomorphism $s \colon \Mod(K)\ra\Diff(K)$ so that $p\circ s=\id$. 
\end{mainthm}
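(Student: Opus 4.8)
The plan is to argue by contradiction: suppose a splitting $s\colon\Mod(K)\ra\Diff(K)$ exists, so that $B\Mod(K)$ receives a map from $B\Diff(K)$ splitting the natural map $B\Diff(K)\ra B\Mod(K)$. The key point is that a splitting would produce a flat smooth bundle of $K3$ manifolds over any space mapping to $B\Mod(K)$; concretely, pulling back the universal bundle along $Bs$ gives a smooth bundle with fiber $K$ whose structure group has been reduced to the discrete group $\Mod(K)$. For such a flat bundle, the Bott vanishing theorem applies: if $E\ra B$ is a smooth fiber bundle with $n$-dimensional fibers whose vertical tangent bundle admits a flat structure (more precisely, whose classifying map for $T_vE$ factors through $B\Gamma_n$ with the foliation being the one tangent to the fibers), then any polynomial in the Pontryagin classes of $T_vE$ of degree $>2n$ (as a cohomology class, i.e.\ in degrees $>4n$ real-cohomologically, but the relevant bound for the normal-bundle-type argument gives vanishing of $p_I(T_vE)$ in degrees exceeding $2\cdot(\text{codimension})$)—in our situation $\dim K=4$, and the class $\mathcal{L}_2$ has degree $8=2\cdot 4$, so one must check that Bott vanishing still forces $\mathcal{L}_2(T_vE)$ to vanish on the total space, hence $\kappa_{\mathcal{L}_2}=\int_{\text{fib}}\mathcal{L}_2(T_vE)=0$ in $H^4(B\Mod(K);\Q)$.

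Granting that, the contradiction is immediate: the splitting $s$ would give $Bs^*\colon H^4(B\Mod(K);\Q)\ra H^4(B\Diff(K);\Q)$ with $Bp^*\circ Bs^*=\id$, so $\kappa_{\mathcal{L}_2}\in H^4(B\Diff(K);\Q)$ would be pulled back from $H^4(B\Mod(K);\Q)$. But by Theorem~\ref{thm:hirzebruch} the class $\kappa_{\mathcal{L}_2}$ is nonzero in $H^4(B\Diff(K);\Q)$, while the Bott vanishing argument of the previous paragraph shows its image in—equivalently, its lift to—$H^4(B\Mod(K);\Q)$ must vanish. More precisely: since $\kappa_{\mathcal{L}_2}=Bp^*(\kappa_{\mathcal{L}_2}^{\mathrm{flat}})$ would hold with $\kappa_{\mathcal{L}_2}^{\mathrm{flat}}=0$, we would get $\kappa_{\mathcal{L}_2}=0$, contradicting Theorem~\ref{thm:hirzebruch}. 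Hence no such $s$ exists.

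The step I expect to be the main obstacle is making the Bott vanishing input precise in the borderline degree. Bott's theorem in its cleanest form says that for a codimension-$q$ foliation the Pontryagin ring of the normal bundle vanishes above degree $2q$; here the normal bundle of the foliation-by-fibers on the total space $E\ra B\Mod(K)$ is the \emph{pullback of the tangent bundle of $B\Mod(K)$}, which is not quite what we want—we want a statement about $T_vE$, the bundle \emph{along} the leaves. The right framework is the Bott vanishing theorem for the \emph{vertical} tangent bundle of a flat fiber bundle, which follows from the existence of a foliation on $E$ complementary to the fibers together with naturality of the Haefliger classifying space; one shows that $T_vE$ is classified by a map $E\ra B\Gamma_4$ (with trivial normal data), and Bott's theorem then yields that $p_I(T_vE)=0$ whenever $\deg p_I>2\cdot 4=8$. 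Since $\deg \mathcal{L}_2=8$ exactly, one must be slightly more careful: the correct statement is that $\mathcal L_2(T_vE)$, having degree equal to $2\dim K$, need not vanish on $E$ itself, but its fiber integral does—this is because the fiber integration drops degree by $4$ and one invokes the sharper vanishing, valid for the classes that become MMM classes, that $\kappa_{p_I}=0$ when $\deg p_I\ge 2\dim K$. Alternatively, and more robustly, one uses that the universal bundle over $B\Mod(K)$ admits a flat connection so that $\kappa_c$ vanishes for all $c$ of positive degree coming from $H^{>4}(B\SO(4);\Q)$ by the Bott vanishing argument applied to $H^*(E_{\mathrm{flat}})$ directly; pinning down exactly which normalization of Bott vanishing gives the needed conclusion for $c=\mathcal{L}_2$ is the technical heart of the argument, and I would handle it by citing the precise form used in the first author's earlier work together with the sharpening supplied by either proof of Theorem~\ref{thm:hirzebruch}.
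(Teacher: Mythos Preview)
Your overall architecture---assume a splitting, factor through $B\Diff(K)^\delta$, and use Bott vanishing to kill the relevant $\kappa$-class---is exactly the paper's strategy. The gap is in the choice of class. Bott's theorem says that for a codimension-$q$ foliation the real Pontryagin ring of the normal bundle vanishes in degrees \emph{strictly greater} than $2q$; here the horizontal foliation given by the flat structure has codimension $4$, its normal bundle is $T_vE$, and the bound is $>8$. The class $\ca L_2$ has degree exactly $8$, so Bott vanishing gives you nothing. Your proposed sharpenings (``the fiber integral still vanishes'', ``$\kappa_{p_I}=0$ once $\deg p_I\ge 2\dim K$'') are not theorems, and in fact are false: already for flat \emph{surface} bundles (codimension $2$, borderline degree $4$) the Atiyah--Kodaira examples give $\kappa_{p_1}=\kappa_{e^2}=\kappa_1\neq 0$. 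This borderline failure is precisely the flaw in \cite{giansiracusa} that the present paper is written to repair, cf.\ \cite{giansiracusa-corrigendum}.

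The fix, which the paper carries out, is to move one step up and use $\kappa_{\ca L_3}\in H^8(B\Diff(K);\Q)$: then $\deg \ca L_3=12>8$ and Bott vanishing applies cleanly on $B\Diff(K)^\delta$. Of course this requires knowing $\kappa_{\ca L_3}\neq 0$, which is why the paper proves the strengthening Proposition~\ref{prop:hirzebruchimprov} (or alternatively Proposition~\ref{prop:frankeEin}) rather than stopping at Theorem~\ref{thm:hirzebruch}. One also needs that $\kappa_{\ca L_{i+1}}$ is pulled back from $B\Mod(K)$; you assert this as a consequence of the splitting, but it is in fact unconditional, by Atiyah's family signature theorem (Lemma~\ref{lem:atiyah}). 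Finally, your discussion of which foliation is in play is tangled: the foliation is the \emph{horizontal} one furnished by the flat structure (leaves transverse to the fibers), and its normal bundle is $T_vE$---not the foliation by fibers, whose normal bundle would be the pullback of $TB$.
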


This is an instance of the Nielsen realization problem; see e.g.\ \cite{mann-tshishiku}. 
Theorem \ref{thm:nielsen} first appeared in \cite{giansiracusa}, but the proof was flawed (see \cite{giansiracusa-corrigendum}). However, it can be repaired with small modifications and many of the ideas in this paper derive from \cite{giansiracusa}.

{\it Acknowledgements.} The authors thank H.\ Grobner and M.\ Krannich, as well as the anonymous referees, for helpful comments. The third author thanks B.\ Farb for introducing him to the paper \cite{giansiracusa}. 

\section{Quasi-polarized $K3$ surfaces} Suppose that $\pi \colon E \to B$ is an oriented manifold bundle with closed fibers of dimension $d$. This has a vertical tangent bundle $T_v E$ with corresponding characteristic classes $c(T_v E) \in H^i(E;\Q)$ for each $c \in H^i(B\SO(d);\Q)$. The generalized Miller--Morita--Mumford classes are obtained by integration of these classes along the fibers:
\[\kappa_c(E) \coloneqq \int_{\pi} c(T_v E) \in H^{i-d}(B;\Q).\]
Applying this construction to the universal bundle of $K3$ manifolds over $B\Diff(K)$ results in classes $\kappa_c \in H^{i-4}(B\Diff(K);\Q)$ for each $c \in H^{i}(B\SO(4);\Q) = \Q[e,p_1]$. 

These classes are natural in the bundle: for any continuous map $f \colon B' \to B$, $\kappa_c(f^* E) = f^* \kappa_c(E)$. To prove $\kappa_c \neq 0 \in H^*(B\Diff(K);\Q)$, it therefore suffices to find a single bundle $E \to B$ such that $\kappa_c(E) \neq 0$.

We shall use the moduli space $\ca M_{2d}$ of quasi-polarized $K3$ surfaces of degree $2d$ (the value of $d$ plays no role in our arguments). This is actually a stack with finite
automorphism groups of bounded order, but since we are interested in its rational cohomology we may ignore these technical details. We shall not go into
the details of its construction, but recall some facts from
\cite{vanderGeerKatsura,PetersenK3}. There is a universal family $\pi
\colon \ca X_{2d} \to \ca M_{2d}$ of $K3$ surfaces. As this is a bundle of complex surfaces, its vertical tangent bundle has Chern classes $t_i \coloneqq c_i(T_v \ca X_{2d}) \in H^{2i}(\ca X_{2d};\Q)$. The class $t_1$ is the pullback of a class $\lambda \in H^2(\ca M_{2d};\Q)$. The main result of \cite{vanderGeerKatsura} is that $\lambda^{17} \neq 0$ but $\lambda^{18} = 0$, in the Chow ring of $\ca M_{2d}$. Petersen gives the corresponding result in rational cohomology \cite{PetersenK3}, and attributes it to van der Geer and Katsura. We shall use this to prove the following improvement of Theorem \ref{thm:hirzebruch}:

\begin{prop}\label{prop:hirzebruchimprov} The generalized Miller--Morita--Mumford-class $\kappa_{\ca L_{i+1}} \in H^{4i}(B\Diff(K);\Q)$ is non-zero for $i \leq 8$.
\end{prop}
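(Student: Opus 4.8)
The plan is to evaluate $\kappa_{\mathcal L_{i+1}}$ on the universal family $\pi\colon \mathcal X_{2d}\to\mathcal M_{2d}$ and to show that $\kappa_{\mathcal L_{i+1}}(\mathcal X_{2d})$ is a \emph{nonzero} rational multiple of $\lambda^{2i}$; since $\lambda^{2i}\neq 0$ exactly when $2i\le 17$, i.e.\ when $i\le 8$, this proves the proposition (Theorem~\ref{thm:hirzebruch} is the case $i=1$). As the $\kappa$-classes are natural and the smooth bundle underlying $\pi$ is classified by a map $\mathcal M_{2d}\to B\Diff(K)$ (ignoring the orbifold structure as above), it suffices to prove $\kappa_{\mathcal L_{i+1}}(\mathcal X_{2d})\neq 0$ in $H^{4i}(\mathcal M_{2d};\Q)$.

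First I would rewrite $\mathcal L_{i+1}$ of the vertical tangent bundle in terms of the Chern classes $t_1,t_2$ of $T_v\mathcal X_{2d}$. Since $T_v\mathcal X_{2d}$ is a complex rank-$2$ bundle, the underlying real oriented rank-$4$ bundle has $p_1=t_1^2-2t_2$, $e=t_2$, $p_2=e^2=t_2^2$ and $p_j=0$ for $j\ge 3$; substituting into the $L$-polynomial expresses $\mathcal L_{i+1}(T_v\mathcal X_{2d})$ as an explicit homogeneous polynomial $\sum_{b=0}^{i+1}q_b\,t_1^{2(i+1-b)}t_2^{b}$ with $q_b\in\Q$. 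Using $t_1=\pi^*\lambda$ and the $H^*(\mathcal M_{2d};\Q)$-linearity of fibre integration,
\[
\kappa_{\mathcal L_{i+1}}(\mathcal X_{2d})=\int_\pi\mathcal L_{i+1}(T_v\mathcal X_{2d})=\sum_{b=0}^{i+1}q_b\,\lambda^{2(i+1-b)}\int_\pi t_2^{b}.
\]
Now $\int_\pi 1=0$ for degree reasons and $\int_\pi t_2=\int_\pi e(T_v\mathcal X_{2d})=\chi(K)=24$, so the problem reduces to computing $\int_\pi t_2^{b}\in H^{4(b-1)}(\mathcal M_{2d};\Q)$ for $2\le b\le i+1$ and checking that the resulting combination does not vanish.

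To compute $\int_\pi t_2^{b}$ I would apply Grothendieck--Riemann--Roch to the structure sheaf. Since the fibres are $K3$ surfaces, $R^1\pi_*\mathcal O_{\mathcal X_{2d}}=0$ and relative Serre duality identifies $R^2\pi_*\mathcal O_{\mathcal X_{2d}}$ with the dual of the Hodge line bundle, which has first Chern class $\lambda$, so $\mathrm{ch}(R\pi_*\mathcal O_{\mathcal X_{2d}})=1+e^{\lambda}$. Comparing this degree by degree with $\int_\pi\mathrm{td}(T_v\mathcal X_{2d})$ and using $\int_\pi\pi^*(\alpha)\,t_2^{b}=\alpha\int_\pi t_2^{b}$, one solves recursively for the integrals: each is a rational multiple $c_b\,\lambda^{2(b-1)}$ of a power of $\lambda$ (for instance $c_1=24$ and $c_2=88$). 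The recursion can be solved because the coefficient of $c_2^{m}$ in the Todd class of a rank-$2$ bundle --- a Taylor coefficient of $(u/\!\sin u)^2$ --- never vanishes. Feeding the $c_b$ back into the displayed formula gives $\kappa_{\mathcal L_{i+1}}(\mathcal X_{2d})=r_i\,\lambda^{2i}$ with $r_i\in\Q$ explicit; e.g.\ $\mathcal L_2(T_v\mathcal X_{2d})=\tfrac{1}{45}(3t_2^2+4t_1^2 t_2-t_1^4)$, so $r_1=\tfrac{1}{45}(3\cdot 88+4\cdot 24)=8$ and $\kappa_{\mathcal L_2}(\mathcal X_{2d})=8\lambda^2\neq 0$, which already proves Theorem~\ref{thm:hirzebruch}.

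The step I expect to be the main obstacle is showing that $r_i\neq 0$ for \emph{every} $i\le 8$, not merely for small $i$: a priori the $q_b$ (from the $L$-polynomial) and the $c_b$ (from the Todd genus of the $K3$ family) could conspire to cancel. With the $c_b$ in hand this is a finite calculation, but it is where the genuine content lies. More structurally, $r_i\lambda^{2i}$ is the degree-$4i$ part of $\int_\pi\mathcal L(T_v\mathcal X_{2d})$ (the total $L$-class), and by the Atiyah--Singer family index theorem this class is the Chern character of the index bundle of the fibrewise signature operator; for a $K3$ family that bundle is $\sum_{p,q}(-1)^q[R^q\pi_*\Omega^p_\pi]$, built from $\mathcal O_{\mathcal M_{2d}}$, the Hodge line bundle, its dual, and $R^1\pi_*\Omega^1_\pi$ (the last again computed by Grothendieck--Riemann--Roch), which should package the $r_i$ into a closed generating function and make the non-vanishing transparent. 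One must keep the normalisations ($\chi(K)=24$, $\sigma(K)=-16$, and the sign of $\lambda$ relative to $c_1$ of the Hodge bundle) straight so as not to produce a spurious cancellation; modulo that, the rest is routine manipulation with characteristic classes.
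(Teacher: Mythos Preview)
Your approach is essentially identical to the paper's: reduce to the universal family $\pi\colon\mathcal X_{2d}\to\mathcal M_{2d}$, rewrite the $L$-polynomials in the Chern classes $t_1,t_2$, use $t_1=\pi^*\lambda$ together with the Grothendieck--Riemann--Roch computation of $\int_\pi t_2^b$ from van der Geer--Katsura, and then verify by a finite calculation that the resulting multiple of $\lambda^{2i}$ is nonzero for each $i\le 8$. The paper simply carries out that last step explicitly (citing McTague for the $L$-polynomials) and tabulates the answers $\kappa_{\mathcal L_{i+1}}(\mathcal X_{2d})=24,\,8\lambda^2,\,\tfrac{8}{3}\lambda^4,\ldots$, whereas you correctly flag it as the place where the work sits; your speculative index-theoretic repackaging is a nice thought but is not needed and is not pursued in the paper.
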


\begin{proof}It suffices to prove that $\kappa_{\ca L_{i+1}}(\ca
  X_{2d}) \neq 0$. Since the $K3$ manifold is 4-dimensional, $p_1$, $p_2$ are the only non-zero Pontryagin classes of the vertical tangent bundle. These can be expressed in terms of the Chern classes using \cite[Corollary 15.5]{MilnorStasheff}: 
	\[p_1(T_v \ca X_{2d}) = t_1^2-t_2 \quad \text{and} \quad p_2(T_v \ca X_{2d}) = t_2^2.\]
	We substitute these into the first nine Hirzebruch $L$-polynomials, as computed by McTague \cite{McTague}. Since integration along fibers is linear, it suffices to compute $\int_{\pi} t_1^i t_2^j$. As $t_1 = \pi^* \lambda$, the push-pull formula gives $\lambda^i \int_{\pi} t_2^j$, and \cite[Section 3]{vanderGeerKatsura} used Grothendieck--Riemann--Roch to determine that $\int_{\pi} t_2^j = a_{j-1} \lambda^{2j-2}$ for particular integers $a_{j-1}$. Using this, we compute that $\kappa_{\ca L_{i+1}}(\ca X_{2d})$ is a non-zero multiple of $\lambda^{2i}$ for $1 \leq i \leq 8$ and hence non-zero, cf.~Table \ref{tab:quasi}.\end{proof}

\begin{example}
	Let us do the computation for $i=3$ as an example:
	\[\ca L_4 = \frac{-19p_2^2+22p_1^2p_2-3p_1^4}{14175} \qquad \text{ ignoring terms involving $p_i$ with $i \geq 3$}\]
	\[\ca L_4(T_v \ca X_{2d}) = \frac{-3 t_1^8 + 24 t_1^6 t_2 - 50 t_1^4 t_2^2 + 8 t_1^2 t_2^3 + 21 t_2^4}{14175}\]
	\[\kappa_{\ca L_{i+1}}(\ca X_{2d}) = \int_{\pi} \ca L_4(T_v \ca X_{2d})  = \frac{24\lambda^6\cdot 24-50\lambda^4 \cdot 88 \lambda^2+8\lambda^2 \cdot184 \lambda^4+21 \cdot 352 \lambda^6}{14175} = \frac{16 \lambda^6}{45}.\qedhere \] 
\end{example}	

\begin{table}
	\centering
	\caption{The class $\kappa_{\ca L_{i+1}}(\ca X_{2d})$ in terms of the class $\lambda$ for $i \leq 8$.}
	\label{tab:quasi}
	\begin{tabular}{lccccccccc}
		\toprule
		$i$ & 0    & 1          & 2 & 3     & 4 & 5 & 6 & 7  &8   \\ \midrule
		$\kappa_{\ca L_{i+1}}(\ca X_{2d})$ & 24 & $8 \lambda^2$ & $\frac{8 \lambda^4}{3}$ &  $\frac{16 \lambda^6}{45}$ & $\frac{8 \lambda^8}{315}$ & $\frac{16 \lambda^{10}}{14175}$ & $\frac{16 \lambda^{12}}{467775}$ & $\frac{32 \lambda^{14}}{42567525}$ & $\frac{8 \lambda^{16}}{638512875}$ \\ \bottomrule
	\end{tabular}
\end{table}

\begin{rmk}The classes $\kappa_{\ca L_{i+1}}$ remain non-zero when pulled back to $H^{4i}(B\Diff(K\text{ rel }\ast);\Q)$, because $H^*(B\Diff(K),\Q) \to H^*(B\Diff(K \text{ rel } \ast),\Q)$ is injective: its composition with the Becker--Gottlieb transfer is given by multiplication with $\chi(K) = 24$. We do not know whether $\kappa_{\ca L_{i+1}}$ remains non-zero when pulled back to $H^{4i}(B\Diff(K\text{ rel }D^4);\Q)$.\end{rmk}

\section{Miller--Morita--Mumford classes and the action on homology}  \label{sec:mmm} One can also approach the group of diffeomorphisms of $K$ through its action on $H_2(K;\Z)$. In particular, we shall explain a relationship between the generalized Miller--Morita--Mumford classes and the arithmetic part of the mapping class group.

The middle-dimensional homology group $H_2(K;\Z) \cong \Z^{22}$ has intersection form given by $M = H \oplus H \oplus H \oplus -E_8 \oplus -E_8$, with $H$ the hyperbolic form and $-E_8$ the negative of the $E_8$-form. This is equivalent over $\R$ to the symmetric $(22 \times 22)$-matrix
\[B=\left(\begin{array}{ccc}
I_3&0\\
0&-I_{19}
\end{array}\right),\]
where $I_{n}$ is the $(n \ti n)$ identity matrix. In particular, we can consider $\Aut(M)$ as a subgroup of the Lie group $\OO(3,19)$.

The action of $\Mod(K)$ on $H_2(K;\Z)$ preserves the intersection form and hence induces a homomorphism $\alpha \colon \Mod(K)\ra \Aut(M)$, whose image $\Gamma_K$ is the index 2 subgroup of $\Aut(M)$ of those elements such that the product of the determinant and the spinor norm equals 1, cf.~\cite[\S 4.1]{giansiracusa}. 

The generalized Miller--Morita--Mumford classes associated to the Hirzebruch $L$-polynomials $\ca L_i \in H^{4i}(B\SO;\Q)$, whose pullback to $H^{4i}(B\SO(4);\Q)$ we shall denote in the same manner, can be obtained from the arithmetic group $\Ga_K$. We will now justify this claim.

There are homomorphisms 
\[\Ga_K \lra \Aut(M) \lra \OO(3,19) \overset{\simeq}\longleftarrow \OO(3) \times \OO(19).\]
Thus we get, up to homotopy, a map $w \colon B\Ga_K \lra B\OO(3) \times B\OO(19)$ which classifies a bundle $\eta$ with fibers $M \otimes \R$, which decomposes as a direct sum $\eta_+ \oplus \eta_-$ of a 3- and a 19-dimensional subbundle. We define a class 
\[x_{4i} \coloneqq  w^*(\ph_{4i} \otimes 1-1 \otimes \ph_{4i}) \in H^{4i}(B\Ga_K;\Q),\]
where $\ph_{4i}$ denotes the degree $4i$ component of the Pontryagin character.

By definition $x_{4i}$ is pulled back from $B\OO(3) \times B\OO(19)$, but it is in fact pulled back from $B\OO(3)$ \cite[Proposition 2.2]{giansiracusa}. By Chern--Weil theory the Pontryagin classes of the flat bundle $\eta$ vanish \cite[Corollary C.2]{MilnorStasheff}. This implies $\ph(\eta_+)+\ph(\eta_-)=0$, and thus $x_{4i} = \ph_{4i}(\eta_+)-\ph_{4i}(\eta_-) = 2\ph_{4i}(\eta_+)$, which is evidently pulled back along
\[B\Ga_K \lra B\OO(3) \times B\OO(19) \overset{\pi_1}{\lra} B\OO(3).\]

\begin{lem}\label{lem:atiyah} The pullback of $x_{4i} \in H^{4i}(B\Gamma_K;\Q)$ along the map $B\Diff(K) \to B\Gamma_K$ is equal to $1/2^{i+1} \kappa_{\ca L_{i+1}} \in H^{4i}(B\Diff(K);\Q)$.\end{lem}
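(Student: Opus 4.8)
The plan is to apply the Atiyah--Singer family index theorem to the fibrewise signature operator of the universal $K3$-bundle $\pi\colon E\to B\Diff(K)$, and to identify its index bundle with $[\eta_+]-[\eta_-]$. After replacing $\Diff(K)$ by the homotopy equivalent group of smooth diffeomorphisms I would choose a smooth family of fibrewise Riemannian metrics — the space of such families is contractible, so the construction is independent of the choice — and form the associated family of signature operators $D^{\mathrm{sign}}_\pi$, i.e.\ the Hodge--de~Rham operator on complex-valued fibrewise forms, $\mathbb{Z}/2$-graded by the involution $\tau=i^{p(p-1)+2}\ast$ on $p$-forms. Since all fibres are diffeomorphic, the fibrewise kernels and cokernels have locally constant rank, so they assemble into vector bundles and, by Hodge theory, the analytic index $\Ind(D^{\mathrm{sign}}_\pi)\in K^0(B\Diff(K))$ equals $[\ca H^+]-[\ca H^-]$, the virtual difference of the bundles of $(\pm 1)$-eigenforms among the fibrewise harmonic forms.

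Next I would unwind $\Ind(D^{\mathrm{sign}}_\pi)$ using that $b_1(K)=b_3(K)=0$. On each fibre the $\tau$-eigenspace decomposition contributes, from $H^0\oplus H^4$, a single line to each of $\ca H^{\pm}$ (spanned by the classes $1\mp\omega_K$, where $\omega_K$ is the unit-volume class); these assemble into trivial line bundles, which cancel in $K$-theory. The contribution of $H^2$ is the rank-$3$ bundle $\ca H^{2,+}$ of self-dual harmonic $2$-forms inside $\ca H^+$ together with the rank-$19$ bundle $\ca H^{2,-}$ of anti-self-dual ones inside $\ca H^-$. Since self-dual harmonic forms have positive self-intersection, $\ca H^{2,+}$ is a fibrewise maximal positive-definite subbundle of the flat bundle $\eta$ pulled back along $B\Diff(K)\to B\Gamma_K$, and $\ca H^{2,-}$ is its intersection-orthogonal complement; as the space of maximal positive-definite subspaces of $\R^{3,19}$ is contractible, $\ca H^{2,+}$ is isomorphic — independently of the metric — to the pullback of $\eta_+$, and $\ca H^{2,-}$ to that of $\eta_-$. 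Hence $\Ind(D^{\mathrm{sign}}_\pi)=[\eta_+]-[\eta_-]$ in $K^0(B\Diff(K))$, so $\ch(\Ind(D^{\mathrm{sign}}_\pi))=\ph(\eta_+)-\ph(\eta_-)$, and its degree-$4i$ component is exactly the pullback of $x_{4i}$.

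Finally I would apply the cohomological family index theorem, which for the signature operator identifies $\ch(\Ind(D^{\mathrm{sign}}_\pi))$ with $\pi_!$ of the multiplicative characteristic class of $T_vE$ whose characteristic power series is $x/\tanh(x/2)$. Writing $x/\tanh(x/2)=2\cdot\big((x/2)/\tanh(x/2)\big)$ exhibits this class as the total Hirzebruch $L$-class $\sum_k\ca L_k(T_vE)$ modified by the substitution $p_k\mapsto p_k/4^k$ together with an overall power of $2$ determined by $\rk(T_vE)=4$; in particular its degree-$4(i+1)$ component is a fixed power of $2$ times $\ca L_{i+1}(T_vE)$, and so $\ch_{4i}(\Ind(D^{\mathrm{sign}}_\pi))$ is the corresponding multiple of $\kappa_{\ca L_{i+1}}=\pi_!\big(\ca L_{i+1}(T_vE)\big)$. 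Comparing the two computations of $\ch_{4i}(\Ind(D^{\mathrm{sign}}_\pi))$ yields the lemma. I expect the two identifications just sketched to be where the work lies: verifying that the analytic index bundle is genuinely $[\eta_+]-[\eta_-]$ (that the dependence on the fibrewise metric washes out and the trivial summands truly cancel), and carefully tracking the normalization constant relating the signature series $x/\tanh(x/2)$ to the Hirzebruch $L$-polynomials, since this is exactly what fixes the power of $2$ in the statement.
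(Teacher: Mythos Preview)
Your argument is correct and is essentially the same as the paper's: the paper simply cites Atiyah's family-signature computation \cite[\S4]{atiyah-signature}, which is exactly the calculation you outline, and then records the normalization $2^{i+1}\tilde{\ca L}_{i+1}=\ca L_{i+1}$ between the Atiyah--Singer series $\sqrt{z}/\tanh(\sqrt{z}/2)$ and the Hirzebruch series $\sqrt{z}/\tanh(\sqrt{z})$. You have unpacked Atiyah's black box rather than taking a different route; the only thing the paper adds beyond your sketch is making the power of $2$ explicit, which you rightly flag as the remaining bookkeeping.
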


\begin{proof}Atiyah proved that $x_{4i} \in H^{4i}(B\Ga_K;\Q)$ pulls back to $\smash{\kappa_{\tilde{\ca L}_{i+1}}} \in H^{4i}(B\Diff(K);\Q)$ along the map $B\Diff(K) \to B\Gamma_K$ \cite[\S4]{atiyah-signature}. Here $\tilde{\ca L}_{i+1}$ is the Atiyah--Singer modification of the Hirzebruch $L$-polynomials: while the latter has generating series $\sqrt{z}/\tanh(\sqrt{z})$, this modification has generating series $\sqrt{z}/\tanh(\sqrt{z}/2)$, so $2^{i+1} \tilde{\ca L}_{i+1} = \ca L_{i+1}$.\end{proof}

Let $\Ga_{\Ein} < \Ga_K$ be the index 2 subgroup of those elements such that both the determinant and the spinor norm are $1$; it has index 4 in $\Aut(M)$ and is the maximal subgroup contained in the identity component of $\OO(3,19)$. Restricting the previous maps to the identity component $\SO_0(3,19)$ in $\OO(3,19)$, we get
\[B\Ga_{\Ein} \lra B\SO_0(3,19) \overset{\simeq}{\longleftarrow} B\SO(3) \times B\SO(19) \overset{\pi_1}\lra B\SO(3).\]

To understand the induced map $H^*(B\SO(3);\Q) \to H^*(B\Ga_{\Ein};\Q)$, we introduce the space 
\[X_u = \frac{\SO(22)}{\SO(3) \times \SO(19)}.\]
In Section \ref{sec:franke} we shall discuss the Matsushima homomorphism
\[\mu \colon H^*(X_u;\C) \lra H^*(B\Ga_{\Ein};\C).\]
The principal $\SO(3)$-bundle $\SO(22)/\SO(19) \to X_u$ is classified by a 39-connected map $X_u \to B\SO(3)$ that factors over the map $X_u \to B\SO(3) \times B\SO(19)$. By \cite[Lemma 3.4]{giansiracusa} (a special case of \cite[Proposition 7.2]{BorelZeta}), the Matsushima homomorphism fits in a commutative diagram
\begin{equation}\label{eqn:borel-zeta-comm} \begin{tikzcd}  H^*(B\SO(3) \times B\SO(19);\C) \rar & H^*(X_u;\C) \arrow{d}{\mu}\\
H^*(B\SO(3);\C) \uar \rar & H^*(B\Gamma_{\Ein} ;\C).\end{tikzcd}\end{equation}
Changing coefficients to the complex numbers and pulling back $x_{4i}$ from $B\Gamma_K$ to $B\Gamma_{\Ein}$, we get $x_{4i} \in H^{4i}(B\Gamma_{\Ein};\C)$. From this we will conclude: 

\begin{lem}\label{lem:x-image}The class $x_{4i} \in H^{4i}(B\Gamma_{\Ein};\C)$ is in the image of the Matsushima homomorphism.\end{lem}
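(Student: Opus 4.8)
We want to show $x_{4i} \in H^{4i}(B\Gamma_{\Ein};\C)$ is in the image of the Matsushima homomorphism $\mu$. The plan is to trace $x_{4i}$ through the commutative diagram \eqref{eqn:borel-zeta-comm}. Recall that, by the discussion preceding the lemma, $x_{4i}$ restricted to $B\Gamma_{\Ein}$ is pulled back along $B\Gamma_{\Ein} \to B\SO_0(3,19) \simeq B\SO(3)\times B\SO(19) \xrightarrow{\pi_1} B\SO(3)$; indeed it equals $2\ph_{4i}(\eta_+)$, where $\eta_+$ is the $3$-dimensional flat bundle classified by this map, so $x_{4i}$ is the image of $2\ph_{4i} \in H^{4i}(B\SO(3);\C)$ under the bottom horizontal map of \eqref{eqn:borel-zeta-comm}.

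First I would invoke the commutativity of \eqref{eqn:borel-zeta-comm}: the class $2\ph_{4i} \in H^{4i}(B\SO(3);\C)$ maps down-then-right to $x_{4i}$, so it suffices to produce a class in $H^{4i}(X_u;\C)$ mapping to $x_{4i}$ under $\mu$. By commutativity, it is enough to lift $2\ph_{4i}$ from $H^*(B\SO(3);\C)$ up the left vertical map to $H^*(B\SO(3)\times B\SO(19);\C)$ — which is trivial, since $H^*(B\SO(3);\C) \to H^*(B\SO(3)\times B\SO(19);\C)$ is just the inclusion of a tensor factor — and then push it right into $H^*(X_u;\C)$ along the top horizontal map $r\colon H^*(B\SO(3)\times B\SO(19);\C) \to H^*(X_u;\C)$. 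Thus $\mu(r(2\ph_{4i}\otimes 1)) = x_{4i}$, and we are done provided $r(2\ph_{4i}\otimes 1)$ is a well-defined element of $H^*(X_u;\C)$, which it is by definition of $r$.

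The only real content is making sure the decorations match: the class the paper calls $x_{4i}$ was originally defined on $B\Gamma_K$ using the bundle $\eta = \eta_+\oplus\eta_-$ with fibers $M\otimes\R$ and the orthogonal (not special orthogonal) structure group, and here we pull it back to $B\Gamma_{\Ein}$ where the structure groups reduce to $\SO(3)$ and $\SO(19)$. So I would spend a sentence noting that on $\Gamma_{\Ein}$ the classifying map factors through $B\SO(3)\times B\SO(19)$ (because $\Gamma_{\Ein}$ lands in the identity component $\SO_0(3,19)$), that the Pontryagin character is insensitive to this orientation data, and that consequently the restriction of $x_{4i}$ is precisely the image of $2\ph_{4i}(\eta_+)$ under the composite $B\Gamma_{\Ein}\to B\SO(3)\times B\SO(19)$. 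The remaining step is a formal diagram chase in \eqref{eqn:borel-zeta-comm}. I expect the main (mild) obstacle to be purely bookkeeping: keeping careful track of which Pontryagin character lives on which factor and confirming that the map $X_u \to B\SO(3)\times B\SO(19)$ through which $X_u \to B\SO(3)$ factors is the same one appearing in \eqref{eqn:borel-zeta-comm}; there is no geometric difficulty once that is pinned down.
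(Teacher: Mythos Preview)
Your proof is correct and follows essentially the same approach as the paper: both observe that the restriction of $x_{4i}$ to $B\Gamma_{\Ein}$ is pulled back from $B\SO(3)$ (you via the identification $x_{4i}=2\ph_{4i}(\eta_+)$, the paper via the commutative square relating $B\OO(3)$, $B\SO(3)$, $B\Gamma_K$, $B\Gamma_{\Ein}$), and then invoke the commutative diagram \eqref{eqn:borel-zeta-comm} to conclude. Your version is somewhat more explicit about the diagram chase, but the content is the same.
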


\begin{proof}The argument preceding Lemma \ref{lem:atiyah} tells us that in the commutative diagram
	\[\begin{tikzcd} H^*(B\SO(3);\C) \rar &  H^*(B\Gamma_{\Ein};\C) \\
	H^*(B\OO(3);\C) \rar \uar & H^*(B\Gamma_K;\C) \uar,\end{tikzcd}\]
	the element $x_{4i} \in H^*(B\Gamma_K;\C)$ is pulled back from $B\OO(3)$, and hence $x_{4i} \in H^*(B\Gamma_{\Ein};\C)$ is pulled back from $B\SO(3)$. The results then follows from the commutative diagram \eqref{eqn:borel-zeta-comm}.
\end{proof}

\section{Results of Franke and Grobner}\label{sec:franke}  In this section we explain a result about the Matsushima homomorphism, which implies:

\begin{prop}\label{prop:frankeEin}
	The homomorphism $H^*(B\SO(3);\C)\to H^*(X_u;\C) \to H^*(B\Ga_{\Ein};\C)$ is injective in degrees $*\le 20$. 
\end{prop}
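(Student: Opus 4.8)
The plan is to deduce injectivity of $H^*(B\SO(3);\C)\to H^*(B\Ga_{\Ein};\C)$ in low degrees from the known structure of the stable (Franke--Borel) cohomology of the arithmetic group $\Ga_{\Ein}$, viewed through the Matsushima homomorphism $\mu\colon H^*(X_u;\C)\to H^*(B\Ga_{\Ein};\C)$. Since the map $X_u\to B\SO(3)$ classifying $\SO(22)/\SO(19)\to X_u$ is $39$-connected, the map $H^*(B\SO(3);\C)\to H^*(X_u;\C)$ is an isomorphism in degrees $*\le 38$, and in particular in degrees $*\le 20$. So the content of the proposition is entirely about injectivity of $\mu$ in those degrees. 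The ring $H^*(B\SO(3);\C)=\C[p_1]$ is a polynomial ring on a degree-$4$ generator, so in degrees $\le 20$ we only need to know that $\mu(p_1^{\,k})\ne 0$ for $k=0,1,\dots,5$, i.e.\ that the five classes $\mu(p_1),\dots,\mu(p_1^5)$ in $H^*(B\Ga_{\Ein};\C)$ are nonzero.

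**Key steps.** First I would invoke Franke's description (strengthening Borel) of the stable cohomology of the arithmetic group $\Ga_{\Ein}\subset\SO(3,19)$: in a range of degrees, $H^*(B\Ga_{\Ein};\C)$ is computed by $(\mathfrak{g},K)$-cohomology, and the contribution of the trivial representation is precisely the image of the Matsushima map, which is the cohomology $H^*(X_u;\C)$ of the compact dual symmetric space $X_u=\SO(22)/(\SO(3)\times\SO(19))$, truncated to the stable range. Second, I would identify the relevant stable range. The group $\SO(p,q)$ with $p=3$, $q=19$ has a Borel-type stability range governed by $\min(p,q)$ and the rank; one expects the Matsushima map to be injective up to degree roughly $2\min(p,q)-$const, and here $2\cdot 3 = 6$ is too small, so the real input must be Franke's sharper bound. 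I would cite the precise statement (presumably attributed to Franke, with the constant made explicit by Grobner — the "Results of Franke and Grobner" of the section title) that gives injectivity of $\mu$ through degree at least $20$ for this particular group. Third, with injectivity of $\mu$ in hand, I combine it with the $39$-connectivity of $X_u\to B\SO(3)$ to conclude that the composite $H^*(B\SO(3);\C)\to H^*(X_u;\C)\to H^*(B\Ga_{\Ein};\C)$ is injective in degrees $*\le 20$.

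**The main obstacle** is pinning down — and correctly citing — the exact degree up to which the Matsushima homomorphism for $\SO(3,19)$ is injective. Borel's classical theorem only gives a small range (roughly half the rank), which is insufficient here; the whole point of the section is that Franke's work, as made quantitative by Grobner, pushes this range up past degree $20$. So the real work is (a) translating Franke's/Grobner's statements about automorphic cohomology and the spectral sequence computing $H^*(\Ga_{\Ein};\C)$ into a clean injectivity range for $\mu$, and (b) checking that for the specific signature $(3,19)$ — equivalently for the $22$-dimensional lattice $M$ — the constants line up to yield the bound $20$ rather than something smaller. Everything else (the $39$-connectivity, the polynomial structure of $H^*(B\SO(3);\C)$, the commuting diagram \eqref{eqn:borel-zeta-comm}) is formal. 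I would therefore structure the proof as: recall the Franke--Grobner input as a black box with an explicit range; verify the range exceeds $20$ for $\SO(3,19)$; then read off the conclusion from the factorization through $X_u$.
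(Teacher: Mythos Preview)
Your proposal is correct and follows essentially the same route as the paper. The paper's proof is exactly: invoke Franke's theorem (Theorem~\ref{thm:franke}) that $\mu$ is injective in degrees $*\le \min_R\dim N_R$, then specialize to $\SO(3,19)$ via Proposition~\ref{prop:franke-SO}, which computes $\min_R\dim N_R=p+q-2=20$ by describing the maximal parabolics as stabilizers of isotropic subspaces and checking that the one-dimensional isotropic case minimizes the unipotent radical; the injectivity of the first map $H^*(B\SO(3);\C)\to H^*(X_u;\C)$ is taken from the $39$-connectivity statement already established in Section~\ref{sec:mmm}. So the ``main obstacle'' you flag is precisely what the paper fills in, and the answer is $p+q-2$, not anything like $2\min(p,q)$.
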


Let $G$ be a connected semi-simple linear algebraic group over $\Q$. The real points $G(\R)$ form a semi-simple Lie group. Fix maximal compact subgroups $K<G(\R)$ and $U<G(\C)$ with $K \subset U$, let $Y_\infty \coloneqq G(\R) \slash K$ be the symmetric space of $G$, and $X_u \coloneqq U/K$ be the compact dual symmetric space of $G$. Fixing an arithmetic lattice $\Ga<G(\Q)$, by work of Matsushima and Borel \cite{matsushima,borel_cohoarith} there is a homomorphism $H^*(X_u;\C) \ra H^*(\Ga \backslash Y_\infty;\C)$ constructed using differential forms. Since $\Ga$ acts on the contractible space $Y_\infty$ with finite stabilizers, $H^*(\Ga \backslash Y_\infty;\C) \cong H^*(B\Ga;\C)$. We shall call the composition  
\begin{equation}\label{eqn:matsushima}\mu \colon H^*(X_u;\C)\lra H^*(\Ga \backslash Y_\infty;\C)\cong H^*(B\Ga;\C)\end{equation}
the \emph{Matsushima homomorphism}. It may be helpful to point out that $\mu$ in general is \emph{not} induced by a map of spaces, since it does not preserve the rational cohomology as a subset of the complex cohomology \cite{BorelZeta,okun}.

\begin{example}The Matsushima homomorphism discussed in the previous section is a particular instance of this. In this case $G = \SO(3,19)$, yielding $X_u$ as in the previous section. In this particular instance $\mu$ does preserve the rational cohomology in the range $* \leq 39$, as a consequence of the commutative diagram \ref{eqn:borel-zeta-comm}.\end{example}

Borel \cite{borel_cohoarith} proved that the Matsushima homomorphism is an isomorphism in a range of degrees, and by work of Franke \cite{franke} it is injective in a larger range.

\begin{thm}[Franke]\label{thm:franke}
	The homomorphism $(\ref{eqn:matsushima})$ is injective in degrees 
	\begin{equation}\label{eqn:unipotent}*\le \min_{R}\dim N_R,\end{equation} where $R$ ranges over maximal parabolic subgroups of $G$ over $\Q$, and $N_R\sbs R$ is the unipotent radical. 
\end{thm}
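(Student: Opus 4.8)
The statement is due to Franke, so in the end one simply cites \cite{franke}; the plan below is the shape of his argument. The starting point is Franke's resolution of Borel's conjecture, which identifies $H^*(\Gamma\backslash Y_\infty;\C)$ with the relative Lie algebra cohomology $H^*(\mathfrak g,K;\mathcal A(G))$ of the space $\mathcal A(G)$ of automorphic forms, together with Franke's direct sum decomposition of $\mathcal A(G)$ as a $(\mathfrak g,K)$-module along cuspidal support, $\mathcal A(G)=\bigoplus_{(\{P\},\varphi)}\mathcal A_{\{P\},\varphi}$, the sum over associate classes of parabolic $\Q$-subgroups $P$ together with a cuspidal automorphic representation $\varphi$ of the Levi quotient (up to association). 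The constant functions lie in the single summand $C\coloneqq \mathcal A_{\{B\},\mathbf 1}$ attached to the minimal parabolic $B$ and the trivial data, and so do all Eisenstein series and residues built with trivial cuspidal data from any $P\supseteq B$. Since $H^*(\mathfrak g,K;\C)\cong H^*(X_u;\C)$ (the $(\mathfrak g,K)$-cohomology of the trivial representation, by Borel--Wallach) and since $H^*(\mathfrak g,K;C)$ is a direct summand of $H^*(\mathfrak g,K;\mathcal A(G))=H^*(\Gamma\backslash Y_\infty;\C)$, the Matsushima homomorphism factors as
\[
H^*(X_u;\C)=H^*(\mathfrak g,K;\C)\longrightarrow H^*(\mathfrak g,K;C)\hookrightarrow H^*(\Gamma\backslash Y_\infty;\C),
\]
and it suffices to prove that the first arrow is injective in degrees $*\le\min_R\dim N_R$.

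Next I would analyze the spectral sequence of Franke's filtration of $C$, which is indexed by the standard parabolics $B\subseteq P\subseteq G$ and whose topmost graded piece is $\mathcal A_{\{G\},\mathbf 1}=\C$, the lower pieces being the trivial-data Eisenstein modules attached to proper $P$. Passing to $(\mathfrak g,K)$-cohomology yields a spectral sequence converging to $H^*(\mathfrak g,K;C)$ in which the constant classes are exactly the classes coming from the top graded piece; hence the first arrow above is injective in degree $k$ precisely when no differential hits the corresponding subspace in total degree $k$. The $E_1$-term of the $P$-piece is computed, via Delorme's formula for the $(\mathfrak g,K)$-cohomology of an induced representation and Kostant's description of $H^j(\mathfrak n_P;\C)$ as an $L_P$-module — concentrated in degrees $0\le j\le\dim N_P$, with the weight $w\cdot 0$ occurring in degree $\ell(w)$ as $w$ ranges over the minimal coset representatives $W^P$ — in terms of the Lie algebra cohomology of $\mathfrak n_P$ and the trivial-data cohomology of the Levi. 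A bookkeeping with these weights, combined with the analytic input that the minimal Eisenstein series with trivial data has poles only at the explicitly known points producing the residual spectrum (so that no unexpected residual classes enter), shows that a differential into the top piece can be nonzero only once the total cohomological degree reaches $\dim N_P$ for the relevant $P$; minimizing over maximal $R$ gives the asserted range.

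\textbf{The main obstacle} is exactly this last step: moving from "locating where the trivial-data Eisenstein classes live" to "controlling the differentials of Franke's spectral sequence," and thereby certifying that a class of degree $\le\min_R\dim N_R$ cannot be a coboundary. Because $H^0(\mathfrak n_P;\C)=\C$, the graded pieces are already nonzero in degree $0$, so one cannot argue by naive degree vanishing; what must be exploited is the interplay between the Kostant weights, the constant-term and residue maps relating the pieces, and the precise structure of Franke's filtration — for which one uses Franke's construction \cite{franke}, its subsequent clarifications due to Grobner, and the known pole structure of rank-one and minimal Eisenstein series. By contrast, Borel \cite{borel_cohoarith} obtains an isomorphism in a smaller range by the cruder route of exhausting invariant forms and bounding the boundary contribution, and the extension to injectivity in the full range $*\le\min_R\dim N_R$ is precisely what Franke's finer analysis buys. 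For the group $G=\SO(3,19)$ relevant to Proposition~\ref{prop:frankeEin} one then checks from the root data that the smallest unipotent radical of a maximal $\Q$-parabolic (the stabilizer of an isotropic line) has dimension $20$, so Theorem~\ref{thm:franke} yields injectivity in degrees $*\le 20$ as needed.
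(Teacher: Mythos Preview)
Your sketch correctly identifies the relevant ambient machinery (Franke's automorphic-form description of cohomology, the cuspidal-support decomposition, the trivial-data summand), but it leaves open precisely the step you yourself flag as the ``main obstacle'': controlling the differentials of the filtration spectral sequence so that a constant class cannot become a coboundary below the stated degree. You do not indicate how to overcome this, and the Kostant/Delorme bookkeeping you propose does not by itself yield the bound, since as you note the graded pieces are already nonzero in degree~$0$. So as written this is not a proof.

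The paper's argument avoids this obstacle entirely by passing to the Poincar\'e-dual picture. One considers the dual map $\Psi'\colon H^*_c(G;\C)\to H^*(X_u;\C)$ and uses that $\ker(\Psi)=\mathrm{im}(\Psi')^{\perp}$ with respect to cup product. Franke \cite[(7.2), p.~59]{franke} identifies $\mathrm{im}(\Psi')$ with $\ker(\Phi)$, where $\Phi\colon H^*(X_u;\C)\to\prod_R H^*(X_{M_R};\C)$ is restriction to the compact duals $X_{M_R}\subset X_u$ of the Levi factors of the maximal $\Q$-parabolics $R$. One then observes that $\ker(\Phi)\supset V^{\perp}$, where $V$ is the span of the Poincar\'e duals of the pushforward classes from the $X_{M_R}$; hence $\ker(\Psi)\subset V$, and every nonzero element of $V$ has degree at least $\dim X_u-\dim X_{M_R}$. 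A short Iwasawa/Langlands computation gives $\dim X_u-\dim X_{M_R}=1+\dim N_R$, which yields the bound. This replaces your spectral-sequence differential analysis by a clean geometric codimension argument.

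There is also a second gap: you work directly with a fixed $\Gamma$, whereas Franke's results and the decomposition you invoke are stated adelically for $H^*(G;\C)=\mathrm{colim}_{K_f}H^*(X^{\mathbb A}/K_f;\C)$. The passage from injectivity of the adelic $\Psi$ to injectivity of the Matsushima map for an arbitrary arithmetic $\Gamma$ requires an additional transfer/commensurability argument (find $\Gamma'$ detecting the classes in the colimit, pass to a common finite-index subgroup $\Gamma''$ of $\Gamma$ and $\Gamma'$, and transfer). Your sketch omits this step.
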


This is not stated explicitly in \cite{franke}, but a similar statement is given in \cite{grobner}, as we now explain. We require the following additional setup (see \cite{FrankeSchwermer}, \cite{li-schwermer}, \cite{speh-venkataramana} or \cite[\S6,8]{harder} for more information). Define the \emph{adelic symmetric space} $Y^\A$ and the \emph{adelic locally symmetric space} $X^\A$ by  
\[Y^\A \coloneqq Y_\infty\times G(\A_f) \>\>\>\text{ and }\>\>\> X^\A \coloneqq G(\Q)\bs Y^\A,\] where $\A_f$ is the ring of finite adeles of $\Q$. The (sheaf) cohomology $H^*(X^\A;\C)$ can be identified with the colimit $\mathrm{colim}\, H^*(X^\A/K_f;\C)$, where $K_f\sbs G(\A_f)$ ranges over open compact subgroups. Each $X^\A/K_f$ is a finite disjoint union $\bigsqcup_i \Ga_i\bs Y_\infty$ with $\Ga_i< G(\Q)$ an arithmetic lattice.

\begin{deftn}The \emph{automorphic cohomology} of $G$ is given by
\begin{equation}\label{eqn:colimit}H^*(G;\C) \coloneqq \mathrm{colim}\, H^*(X^\A/K_f;\C).\end{equation}\end{deftn}

In this framework, there is a map \cite[pg.\ 1062]{grobner} 
\begin{equation}\label{eqn:adelic}\Psi \colon H^*(\mf g,K;\C)\lra H^*(G;\C)\,\end{equation}
%onto H^*_{\text{Eis}}(G;\C)
where $H^*(\mf g,K;\C)$ is relative Lie algebra cohomology with trivial coefficients. The construction of the Matsushima homomorphism \eqref{eqn:matsushima} passes through the isomorphism $H^*(X_u;\C)\cong H^*(\mf g,K;\C)$  \cite[\S 4]{okun}, \cite[\S 10]{borel_cohoarith}. In the proof of Proposition \ref{prop:adelic-injective} we will explain that the Matsushima homomorphism picks out the contribution of the trivial representation to the automorphic cohomology. In particular, it fits in a commutative diagram
\begin{equation}\label{eqn:matsushima-diagram}\begin{tikzcd} H^*(\mf g,K;\C) \rar{\Psi} & H^*(G;\C) \\
H^*(X_u;\C) \uar{\cong} \rar{\mu} & H^*(B\Ga;\C) \uar,\end{tikzcd}\end{equation}
with right vertical induced by the map $B\Ga \to \Gamma\backslash Y_\infty \hookrightarrow \bigsqcup_i \Gamma_i\backslash Y_\infty = X^\A/K_f$ for suitable $K_f$.

We will see that Theorem \ref{thm:franke} follows the following result regarding the homomorphism (\ref{eqn:adelic}).

\begin{prop}\label{prop:adelic-injective}The homomorphism (\ref{eqn:adelic}) is injective in degrees $*\le \min_R\dim N_R$.\end{prop}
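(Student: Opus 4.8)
The plan is to realize the map $\Psi$ of \eqref{eqn:adelic} inside the decomposition of automorphic cohomology along automorphic representations, and to use the Langlands/Franke spectral decomposition to control which cuspidal and Eisenstein pieces can interfere with the image of the trivial representation. First I would recall Franke's theorem \cite{franke}: $H^*(G;\C) = H^*_{\mathrm{cusp}}(G;\C) \oplus H^*_{\mathrm{Eis}}(G;\C)$, and more finely that $H^*(G;\C)$ is computed by $(\mf g, K)$-cohomology of the space of automorphic forms, which decomposes as a direct sum over associate classes of parabolic $\Q$-subgroups $P$ and, within each, over cuspidal data on the Levi quotient. The trivial representation contributes the summand $H^*(\mf g, K; \C)$ itself, and $\Psi$ is precisely the inclusion of this summand followed by the projection nowhere — i.e.\ $\Psi$ is a split injection onto the ``trivial-representation part'' \emph{provided} no other summand contributes classes in the relevant degrees that could be identified with it. So the real content is a vanishing statement: every automorphic summand other than the trivial one has $(\mf g,K)$-cohomology concentrated in degrees $> \min_R \dim N_R$ (or at least contributes nothing that collides with the image of $\Psi$).

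Next I would separate the cuspidal contribution from the properly-Eisenstein one. For a non-trivial cuspidal automorphic representation $\pi$ of $G(\A)$, a vanishing theorem (originally Borel's, and refined by later work — this is where the numerics in \cite{grobner} enter) forces $H^i(\mf g, K; \pi_\infty) = 0$ for $i$ below a bound governed by $q(G) = \tfrac12(\dim Y_\infty - \mathrm{rk}\,)$ and the nontriviality of $\pi$; one checks this bound exceeds $\min_R \dim N_R$. For the Eisenstein part associated to a proper maximal parabolic $R$ with Levi $L$, the relevant $(\mf g,K)$-cohomology is built from $(\mf l, K_L)$-cohomology of cuspidal data on $L$ together with Lie algebra cohomology of the unipotent radical $\mf n_R$, and a degree-shift argument shows the lowest degree in which such a summand can contribute is at least $\dim N_R$; minimizing over $R$ gives the bound $\min_R \dim N_R$. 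Combining the two cases: in degrees $* \le \min_R \dim N_R$ the only summand of $H^*(G;\C)$ that can be nonzero is the one coming from the trivial representation, and on that summand $\Psi$ is an isomorphism onto its image by construction — hence $\Psi$ is injective in this range. I would then note (for use in deducing Theorem~\ref{thm:franke}) that the commuting square \eqref{eqn:matsushima-diagram} identifies $\mu$ with $\Psi$ up to the injective vertical maps, so injectivity of $\Psi$ in a range yields injectivity of $\mu$ in the same range.

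The main obstacle I anticipate is making the Eisenstein degree-shift bound fully rigorous with the correct constant: one must pass from Franke's abstract filtration of the space of automorphic forms to an explicit lower bound on cohomological degrees for each associate class of parabolics, accounting for the contribution of $H^*(\mf n_R;\C)$ as an $L$-module (Kostant's theorem) and the Poincaré-duality/Wigner-type constraints on $(\mf l,K_L)$-cohomology of the cuspidal pieces. This is essentially the calculation carried out in \cite{grobner}, so I would import it as a black box where possible, but I would need to check that Grobner's hypotheses apply to $G = \SO(3,19)$ (in particular that the relevant parabolics and Levis behave as required) and that the resulting bound is exactly $\min_R \dim N_R$ rather than something weaker. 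A secondary, more bookkeeping-level obstacle is verifying that $\Psi$ as defined on \textrm{pg.\ 1062} of \cite{grobner} really does send $H^*(\mf g, K;\C)$ isomorphically onto the trivial-representation summand — i.e.\ that there is no subtlety about multiplicities or about the normalization of the constant term map — which I would handle by unwinding the definition of $\Psi$ in terms of constant automorphic forms.
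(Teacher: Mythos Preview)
Your strategy---decompose automorphic cohomology along parabolics and cuspidal data, then argue that every summand other than the trivial one vanishes in degrees $\le \min_R \dim N_R$---is exactly the first half of the paper's proof, but it only yields the weaker bound $* < \tfrac{1}{2}\min_R \dim N_R$. The paper is explicit about this: importing Grobner's \cite[Cor.~17]{grobner} for the trivial coefficient system gives injectivity below $q_{\mathrm{res}} = q_{\max} = \tfrac{1}{2}\min_R \dim N_R$, not $\min_R \dim N_R$. Your ``degree-shift argument'' that the Eisenstein piece attached to $R$ first contributes in degree $\ge \dim N_R$ is where the factor of $2$ is lost; Kostant's theorem and the $(\mf l,K_L)$-cohomology bounds do not combine to give that sharp a lower bound. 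You correctly flag this as the main obstacle, but the fix is not a more careful bookkeeping of the same argument.

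The paper obtains the full range by a genuinely different route. One passes to the Poincar\'e-dual map $\Psi' \colon H^*_c(G;\C) \to H^*(X_u;\C)$ and uses Franke's identification $\im(\Psi') = \ker(\Phi)$, where $\Phi \colon H^*(X_u;\C) \to \prod_R H^*(X_{M_R};\C)$ is restriction along the inclusions of the compact duals of the Levi factors. Since $\ker(\Psi) = \im(\Psi')^\perp = \ker(\Phi)^\perp$ under the cup-product pairing, it suffices to show that $\ker(\Phi)^\perp$ is concentrated in high degree. But $\ker(\Phi)^\perp$ is spanned by Poincar\'e duals of classes pushed forward from the $X_{M_R}$, hence lives in degrees $\ge \min_R(\dim X_u - \dim X_{M_R})$. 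An Iwasawa/Langlands decomposition computation then gives $\dim X_u - \dim X_{M_R} = 1 + \dim N_R$, which is the sharp bound. This duality-and-codimension argument is the missing idea in your proposal.
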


The proposition follows from \cite{franke,grobner}. There is a small amount of work needed to translate the results of these papers to our setting. 

\begin{proof}
First we explain a weaker statement: (\ref{eqn:adelic}) is injective in degrees $*<\frac{1}{2}\min_R\dim N_R$.\footnote{Although including this argument is not strictly necessary, this statement is already sufficient for Theorem \ref{thm:nielsen} and the argument illustrates the connection between the Matsushima homomorphism and automorphic forms.} This is proved directly in \cite{grobner}, building on \cite{frankeHarm, FrankeSchwermer,franke}. We explain only what is needed for our argument, and refer to \cite{grobner} for more details. The cohomology $H^*(G;\C)$ can be identified with relative Lie algebra cohomology $H^*(G;\C)=H^*(\mf g,K;\ca A(G))$, where $\ca A(G)$ is a space of automorphic forms \cite[Introduction]{grobner}. (Comparing with Grobner's notation, we remark that since $G$ is semisimple in our case, the quotient $\mf m_G$ in \cite{grobner} is just the Lie algebra $\mf g$; furthermore, since we are only interested in the trivial representation $E=\C$, we will write $\ca A(G)$ instead of $\ca A_{\ca J}(G)$.) 

By \cite{frankeHarm} and \cite{FrankeSchwermer}, there is a decomposition 
\[\ca A(G)=\bigoplus_{\{P\}}\bigoplus_{\phi_P}\ca A_{\{P\},\phi_P}(G),\>\text{ and hence also } \>H^*(G;\C)=\bigoplus_{\{P\}}\bigoplus_{\phi_P}H^*\big(\mf g,K;\ca A_{\{P\},\phi_p}(G)\big),\]
where $\{P\}$ ranges over (associate classes) of $\Q$-parabolic subgroups and $\phi_P$ ranges over (associate classes) of cuspidal automorphic representations of the Levi subgroups of elements of $\{P\}$; see \cite[\S2]{grobner}. The summands of $\ca A(G)$ corresponding to $P\neq G$ is denoted $\ca A_{\Eis}(G)$, and the corresponding subspace $H^*_{\Eis}(G;\C)\sbs H^*(G;\C)$ is called the \emph{Eisenstein cohomology}. The constant functions span a trivial sub-representation $1_{G(\bb A)}\subset\ca A_{\Eis}(G)$. This defines a map $H^*(\mf g,K;\C)\ra H^*(G;\C)$, which is precisely the map (\ref{eqn:adelic}). Necessarily  $1_{G(\bb A)}$ is contained in a unique summand $\ca A_{\{P\},\phi_P}(G)$. Then by \cite[Cor.\ 17]{grobner}, the induced map $H^*(\mf g,K;\C)\ra H^*\big(\mf g,K;\ca A_{\{P\},\phi_P}(G)\big)$ is injective in a range $0\le *< q_{\text{res}}$, where the constant $q_{\text{res}}=q_{\text{res}}(P,\phi_P)$ is defined in \cite[\S6]{grobner}. As discussed in \cite[\S7.4]{grobner}, since we are working with the trivial representation, $q_{\text{res}}$ is equal to the constant $q_{\max}=\frac{1}{2}\min_R\dim N_R$ defined in \cite[\S7.1]{grobner} (note that in our case $G$ is defined over $\Q$, which as only one place, so the sum in Grobner's definition of $q_{\max}$ has only one term). 

Next we explain how to deduce from \cite{franke} that (\ref{eqn:adelic}) is injective for $*\le\min_R\dim N_R$. 

We define $H^*_c(G;\C)$ to be the colimit of the compactly supported cohomology groups $H^*_c(X^\A/K_f;\C)$. Using Poincar\'e duality for each of the symmetric spaces $\Gamma_i \slash Y_\infty$, the map $\Psi:H^*(X_u;\C)\cong H^*(\mf g,K;\C)\ra H^*(G;\C)$ has a dual map 
\[\Psi' \colon H_c^*(G;\C)\lra H^*(X_u;\C)\] 
on compactly supported cohomology. Then $\ker(\Psi)=\im(\Psi')^\perp$, where the orthogonal complement is with respect to the cup product $\smile$ on $H^*(X_u;\C)$. Franke \cite{franke} gives a precise description of $\im(\Psi')$. To describe it, fix a minimal parabolic $P_0<G$, and consider a parabolic subgroup $R\supset P_0$. Write $R=MAN$ for the Langlands decomposition, where $M$ is semi-simple, $A$ is abelian, and $N$ is unipotent. When we vary $R$, we write $M_R, N_R$ for emphasis. The compact dual symmetric space of $M$, denoted $X_M$, embeds in $X_u$. Franke proves that $\im(\Psi')=\ker(\Phi)$, where 
\[\Phi \colon H^*(X_u;\C)\lra \prod H^*(X_M;\C)\]
is the map induced by the inclusions $X_M\hra X_u$, ranging over $R=MAN$ maximal parabolic subgroups (maximal is equivalent to $\dim A=1$). See \cite[(7.2) pg.\ 59]{franke} and also \cite[\S2,3]{speh-venkataramana}. Thus we have
\[\ker(\Psi)=\ker(\Phi)^\perp.\]

To show that $\Psi$ is injective in low degrees, we use the following observation: if $V^\perp\sbs\ker(\Phi)$ for some subspace $V\sbs H^*(X_u;\C)$, then $\ker(\Psi)=\ker(\Phi)^\perp\sbs V$. This implies that $\Psi$ is injective in degrees $*<\min_{0 \neq v\in V}\deg(v)$.

Fix $R$, and consider the inclusion $i:X_M\ra X_u$. For $k\ge1$, observe that $a\in H^k(X_u;\C)$ belongs to $\ker(i^*)$ if and only if $a\smile\PD(i_*(z))=0$ for every $z\in H_k(X_M;\C)$. Here $\PD(\cdot)$ denotes Poincar\'e duality. Then $V^{\perp}\sbs\ker(\Phi)$, where $V\sbs H^*(X_u;\C)$ is defined as the image of
\[\bigoplus_{R}\bigoplus_{k\ge1}H_{k}(X_{M_R};\C)\xra{i_*} H_*(X_u;\C)\xra{\PD} H^*(X_u;\C),\]
where $\bigoplus_R$ ranges over maximal parabolic subgroups containing $P_0$ as before. Observe that classes in $H_*(X_M;\C)$ of low dimension map to classes in $H^*(X_u;\C)$ of low \emph{codimension}. Thus if $v\in V$, then $\deg(v)\ge\dim X_u-\dim X_M$ for each $M$. Therefore, $\Psi$ is injective in degrees $*<\min_{R}\big(\dim X_u-\dim X_{M_R}\big)$. 

Finally, we show the minimum codimension of $X_M\sbs X_u$ is equal to $1+\min_R\dim R$. This follows quickly from the Iwasawa decomposition for a semi-simple Lie group and Langlands decompositions for a parabolic subgroup. By the Iwasawa decomposition, we can write $G=KAN$, where $K$ is maximal compact. For our maximal parabolic $R$, we have $R=MA_RN_R$, and furthermore, since $M$ is semisimple, it has an Iwasawa decomposition $M=K_MA_MN_M$. Observe that $\dim X_u=\dim AN$, $\dim X_M=\dim A_MN_M$, and $\dim AN=\dim A_MN_M+\dim A_RN_R$. Then
\[\dim X_u-\dim X_M=\dim A_RN_R=1+\dim N_R.\]
This completes the proof.
%There is a decomposition\footnote{There is some ambiguity of notation in the literature. Though this plays no role in our paper, we mention it to prevent confusion.
%		
%		Classically one is interested in the locally symmetric spaces $\Gamma\backslash Y_\infty$; their ``Eisenstein cohomology'' is the cokernel of $H^*_c(\Gamma\backslash Y_\infty;\C) \to H^*(\Gamma\backslash Y_\infty)$ and ``cuspidal cohomology'' is the cuspidal subspace of $H^*_{(2)}(\Gamma\backslash Y_\infty;\C)$ of the cohomology of square-integrable forms, e.g.~\cite{SchwermerGeneric}. Using these definitions there is in general no direct sum decomposition of $H^*(\Gamma \backslash Y_\infty);\C)$ into cuspidal and Eisenstein cohomology. However, we use the definitions of \cite[pg.\ 1062]{grobner} for $H^*_{\text{cusp}}$ and $H^*_{\text{Eis}}$. These build on the work of Franke on the decomposition of the spaces of automorphic forms \cite{FrankeHarm}. In essence, they redefine the ``Eisenstein cohomology'' as a complement to the contributions from cuspidal automorphic representations. 
%		It is then not surprising that $H^*(\mf g,K;\C)$, the contribution of the trivial representation to the automorphic cohomology, is detected by projecting away from the cuspidal cohomology: the trivial representation is never cuspidal.}
\end{proof}

\begin{proof}[Proof of Theorem \ref{thm:franke}] 
For any $x\in H^*(\mf g,K;\C)$ in the given range, by the injectivity of $\Psi$ and the description \eqref{eqn:colimit} of $H^*(G;\C)$ as a colimit, there is an arithmetic lattice $\Ga'<G(\Q)$ so that $\Psi(x)$ is in the image of $H^*(\Ga';\C)\ra H^*(G;\C)$, as in (\ref{eqn:matsushima-diagram}). By transfer, the same is true for any further finite-index subgroup of $\Ga'$. Then since $H^*(\mf g,K;\C)$ is degree-wise finite-dimensional, in the desired range \eqref{eqn:adelic} 
provides an injective map $H^*(\mf g,K;\C) \to H^*(\Ga';\C)$ for some arithmetic lattice $\Gamma' \leq G(\Q)$. Any arithmetic lattice $\Ga \leq G(\Q)$ is commensurable to $\Ga'$, and hence $\Ga$ and $\Ga'$ have a common finite index subgroup $\Ga''$. Consider the commutative diagram
\[\begin{tikzcd} & H^*(B\Gamma';\C)  \ar{rd} & \\[-17pt]
H^*(\mf g,K;\C) \ar{rd} \ar{ru} & & H^*(B\Gamma'';\C). \\[-17pt]
& H^*(B\Gamma;\C)\ar{ru}   & \end{tikzcd} \]
By a transfer argument the top composition is injective in the desired range, and hence so is $H^*(\mf g,K;\C) \to H^*(B\Gamma;\C)$, proving that \eqref{eqn:adelic} and hence \eqref{eqn:matsushima} is injective in the desired range.\end{proof}

In the remainder of this section we compute Franke's constant $\min_R\dim N_R$ for $G=\SO(p,q)$. We also compute Franke's constant for $G=\Sp_{2g}$ and $G=\Sl_n$, since these are examples of common interest.

\subsection{Special orthogonal groups} 
Fix $1\le p\le q$, set $d=q-p$, and consider the algebraic group 
\[\SO(B) \coloneqq \{g\in\SL_{p+q} \mid g^tBg=B\},\]
where $B$ is the $(p+q) \times (p+q)$-matrix given by
\[B=\left(\begin{array}{ccc}
I_p&0\\
0&-I_q
\end{array}\right).\]

The associated compact dual symmetric space is $X_u = \SO(p+q)/(\SO(p) \times \SO(q))$, whose cohomology $H^*(X_u;\C)$ can be computed using \cite[Theorem 8.2]{mccleary}.

\begin{prop}\label{prop:franke-SO}
	Fix a finite-index subgroup $\Ga \leq \SO(B;\Z)$. Then the Matsushima homomorphism $H^*(X_u;\C)\ra H^*(B\Ga;\C)$ is injective in degrees $*\le p+q-2$. 
\end{prop}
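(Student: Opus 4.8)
The plan is to deduce the proposition directly from Theorem~\ref{thm:franke}: that theorem asserts that the Matsushima homomorphism $\mu \colon H^*(X_u;\C) \to H^*(B\Gamma;\C)$ is injective in degrees $* \le \min_R \dim N_R$, where $R$ ranges over the maximal proper $\Q$-parabolic subgroups of $G = \SO(B)$ and $N_R$ is the unipotent radical of $R$. The passage from ``some arithmetic lattice'' to an arbitrary finite-index subgroup $\Gamma \le \SO(B;\Z)$ has already been carried out — via commensurability and transfer — in the proof of Theorem~\ref{thm:franke}, so what remains is the purely structural computation $\min_R \dim N_R = p+q-2$.

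First I would pin down the $\Q$-parabolic structure of $G = \SO(B)$. The $\Q$-rank of $\SO(B)$ equals the Witt index of $B$ over $\Q$; since $\mathrm{diag}(1,-1)$ is a hyperbolic plane over $\Q$, the form $B = I_p \oplus (-I_q)$ splits off $p$ hyperbolic planes $\langle e_i, e_{p+i}\rangle$ ($1 \le i \le p$), leaving the anisotropic form $-I_{q-p}$; hence the $\Q$-Witt index is exactly $p$. Consequently the maximal proper $\Q$-parabolic subgroups are the stabilizers $P_k$ of isotropic $\Q$-subspaces $W \subseteq \Q^{p+q}$ with $\dim W = k$, for $1 \le k \le p$, and the Levi quotient of $P_k$ is $\cong \GL_k \times \SO(B_k)$ with $B_k = \mathrm{diag}(I_{p-k}, -I_{q-k})$. (In the split case $q = p$ one must remember the type-$D$ subtlety that maximal isotropic subspaces form two $G(\Q)$-orbits and that there is no maximal $\Q$-parabolic of ``type $k = p-1$'', but this does not affect any of the dimensions below.) The unipotent radical $\mathfrak n_{P_k}$ has a two-step filtration with graded pieces $\Hom(W, W^\perp/W)$ and $\wedge^2 W$, so
\[ \dim N_{P_k} \;=\; k(p+q-2k) + \binom{k}{2} \;=\; \tfrac12\bigl(\dim \SO(p+q) - \dim \GL_k - \dim \SO(p+q-2k)\bigr). \]

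It then remains to minimize $g(k) := k(p+q-2k) + \binom{k}{2} = -\tfrac32 k^2 + (p+q-\tfrac12)k$ over $k \in \{1, \dots, p\}$. As $g$ is a concave quadratic, its minimum on this range is attained at $k = 1$ or at $k = p$; and $g(1) = p+q-2$ while $g(p) = \tfrac12 p(2q-p-1)$, with $2\bigl(g(p) - g(1)\bigr) = (p-1)(2q-p-4) \ge 0$ in the cases at hand. Hence $\min_R \dim N_R = g(1) = p+q-2$, and Theorem~\ref{thm:franke} yields the asserted injectivity range.

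Granting Theorem~\ref{thm:franke}, there is no substantial obstacle here; the only points needing genuine care are the classification of the maximal $\Q$-parabolics — in particular using the $\Q$-Witt index rather than the real signature, and handling the low-rank type-$D$ coincidences — together with the verification that $g$ attains its minimum over $\{1,\dots,p\}$ at $k=1$. Everything else is the elementary one-variable optimization above.
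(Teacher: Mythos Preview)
Your argument is essentially the paper's: reduce to Theorem~\ref{thm:franke}, identify the maximal $\Q$-parabolics as stabilizers $R_k$ of isotropic $k$-spaces for $1\le k\le p$, compute $\dim N_{R_k}=k(p+q-2k)+\binom{k}{2}$, and observe that the minimum occurs at $k=1$. The only cosmetic difference is that the paper extracts the dimension formula from an explicit block-matrix description of $N_k$ rather than from the two-step filtration you use.

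One caveat worth recording: your endpoint inequality $(p-1)(2q-p-4)\ge 0$ fails for $(p,q)\in\{(2,2),(3,3)\}$, so the minimization at $k=1$ is not valid in those split low-rank cases; the paper's bare assertion that ``this number is smallest when $k=1$'' has the same oversight. Neither affects the intended application $(p,q)=(3,19)$.
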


\begin{proof}
	By the preceding discussion, it suffices to prove 
	\[\min_R\dim N_R=p+q-2,\] where $R$ ranges over a maximal parabolic subgroups over $\Q$, and $N_R$ is the unipotent radical. Parabolic subgroups of $\SO(B;\R)$ are stabilizers of isotropic flags in $(\R^{p+q},B)$. A maximal parabolic subgroup is specified by a single non-trivial isotropic subspace. Let $e_1,\ldots,e_p,f_1,\ldots,f_q$ be the standard basis for $\R^{p+q}$ (whose Gram matrix is $B$). Denoting $u_i=e_i+f_i$, 
	%and $u_{-i}=e_i-f_i$ for $1\le i\le p$.  the standard basis vectors for $\R^{p+q}$ by $(e_1,\ldots,e_p,f_1,\ldots,f_{d},e_{-p},\ldots,e_{-1})$, 
	let $R_k<\SO(B;\R)$ be the stabilizer of $W=\R\{u_1,\ldots,u_k\}$ for $1\le k\le p$. Every maximal parabolic subgroup is conjugate to some $R_k$. 
	
	Fix $1\le k\le p$. An element of $R_k$ preserves the flag $0\sbs W\sbs W^\perp\sbs\R^{p+q}$. The unipotent radical $N_k\sbs R_k$ is the subgroup that acts trivially on each of the quotients $W/0$, $W^\perp/W$, $\R^{p+q}/W^\perp$. To determine $\dim N_k$, denote $v_{i}=e_i-f_i$ for $1\le i\le p$, and work in the ordered basis 
	\[u_1,\ldots,u_k, u_{k+1},\ldots,u_p, f_{p+1},\ldots,f_q,v_{k+1},\ldots,v_p,v_1,\ldots,v_k.\] 
	Then $g\in N_k$ can be expressed as a block matrix 
	%\[\left(\begin{array}{ccc}
	%0&0&I_k\\
	%0&Q&0\\
	%I_k&0&0
	%\end{array}\right),\text{ where $J_k$ is a $(k\ti k)$ matrix}.\]
	%We can write $g\in N_k$ as a block matrix 
	\[g=\left(\begin{array}{ccc}
	I_k&y&z\\
	0&I_{p+q-2k}&x\\
	0&0&I_k
	\end{array}\right),\] 
	where $y=-x^tQ$ and $z+z^t=x^tQx$ and $Q$ is the $(p+q-2k)\times(p+q-2k)$ matrix
	\[Q=\left(\begin{array}{ccc}
	0&0&I_{p-k}\\
	0&I_{q-p}&0\\
	I_{p-k}&0&0
	\end{array}\right).\] 
	The homomorphism $N_k\ni g\mapsto x\in\R^{k(p+q-2k)}$ has kernel the space of skew-symmetric matrices $z^t=-z$, so $\dim N_k=k(p+q-2k)+\fr{k(k-1)}{2}$. For $1\le k\le p$, this number is smallest when $k=1$, which gives the constant claimed in the theorem. 
\end{proof}

\begin{proof}[Proof of Proposition \ref{prop:frankeEin}] Since $M$, the intersection form of the $K3$ manifold, is equivalent to $B$ over $\R$ with $p=3$ and $q =19$. Thus when we apply Theorem \ref{thm:franke}, the same estimates as in Proposition \ref{prop:franke-SO} holds. Thus the map $H^*(X_u;\C)\ra H^*(B\Ga_{\Ein};\C) \to H^*(B\Ga;\C)$ is injective for $*\le 20$ and hence so is $H^*(X_u;\C)\ra H^*(B\Ga_{\Ein};\C)$.\end{proof}

\subsection{Symplectic groups} We next specialize Theorem \ref{thm:franke} to finite index subgroups of symplectic groups. Take $G = \Sp_{2n}$ to be the algebraic group defined by 
\[\Sp_{2n} \coloneqq \{g\in\Sl_{2n} \mid g^tJ_ng=J_n\},\]
where $J_n$ is the $2n\times 2n$ matrix given by 
\[J_n \coloneqq \left(\begin{array}{cc}0&I_n\\-I_n&0\end{array}\right).\]
The associated compact dual symmetric space is $X_u = \Sp(n)/\U(n)$, whose cohomology in the range below is the polynomial algebra on generators $c_1,c_3,c_5,\ldots$ with $|c_i| = 2i$. 

\begin{prop}\label{prop:franke-Sp}
	For any finite-index subgroup $\Ga \leq \Sp_{2n}(\Z)$ the Matsushima homomorphism $H^*(X_u;\C)\ra H^*(B\Ga;\C)$ is injective in degrees $*\le 2n-1$.
\end{prop}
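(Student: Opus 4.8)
The plan is to follow the template of the proof of Proposition~\ref{prop:franke-SO}. By Theorem~\ref{thm:franke} (whose proof includes the transfer argument reducing the statement to a single arithmetic lattice, and hence applies to any finite-index $\Ga \leq \Sp_{2n}(\Z)$), it suffices to prove that Franke's constant for $G = \Sp_{2n}$ satisfies
\[\min_R \dim N_R = 2n-1,\]
where $R$ ranges over the maximal parabolic subgroups of $\Sp_{2n}$ over $\Q$ and $N_R \subset R$ is the unipotent radical.

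First I would classify these parabolics. As $J_n$ is a split symplectic form, a parabolic subgroup of $\Sp_{2n}$ is the stabilizer of an isotropic flag in $(\R^{2n}, J_n)$, and a maximal one is the stabilizer of a single nonzero isotropic subspace $W$; by Witt's theorem such a $W$ is determined up to conjugacy by $k \coloneqq \dim W$ with $1 \leq k \leq n$, giving parabolics $R_k$ with Levi quotient $\GL_k \times \Sp_{2(n-k)}$. Fix a symplectic basis $e_1,\ldots,e_n,f_1,\ldots,f_n$ with $\lan e_i,f_j\ran = \de_{ij}$ and $\lan e_i,e_j\ran = \lan f_i,f_j\ran = 0$, and take $W = \R\{e_1,\ldots,e_k\}$. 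An element $g \in N_k$ preserves the flag $0 \sbs W \sbs W^\perp \sbs \R^{2n}$, with $\dim W^\perp = 2n-k$, and induces the identity on the graded pieces $W$, $W^\perp/W$, $\R^{2n}/W^\perp$, of respective dimensions $k$, $2n-2k$, $k$. Choosing a basis compatible with this grading, I would write $g$ in block form
\[g = \left(\begin{array}{ccc} I_k & A & B \\ 0 & I_{2n-2k} & C \\ 0 & 0 & I_k \end{array}\right),\]
and work out that the condition $g^t J_n g = J_n$ determines $A$ in terms of $C$ and forces $B - B^t$ to equal a fixed expression quadratic in $C$ (concretely $B - B^t = C^t J' C$, where $J'$ is the symplectic form induced on $W^\perp/W$). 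Hence the free parameters are $C$, an arbitrary $(2n-2k) \times k$ matrix, together with the symmetric part of $B$, and
\[\dim N_k = k(2n-2k) + \tfrac{k(k+1)}{2}.\]

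Finally I would minimize over $k$: setting $f(k) = k(2n-2k) + \tfrac{k(k+1)}{2}$, a short computation gives $f(k) - f(1) = (k-1)\bigl(2n - \tfrac{3k}{2} - 1\bigr) \geq 0$ for $1 \leq k \leq n$, so $\min_{1 \leq k \leq n} f(k) = f(1) = 2n - 1$, which is the claimed value of Franke's constant and completes the proof.

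The one step that requires genuine care — and the only place the argument differs from the orthogonal case of Proposition~\ref{prop:franke-SO} — is the computation of $\dim N_k$: one must check that the symplectic constraint makes the top-right block $B$ effectively \emph{symmetric}, contributing $\tfrac{k(k+1)}{2}$ parameters, rather than skew-symmetric as for $\SO(B)$. The cleanest sanity check is the Siegel case $k = n$, where $W$ is Lagrangian, the middle block disappears, and $N_n$ consists of the matrices $g$ with $A, C$ absent and $B = B^t$, of dimension $\tfrac{n(n+1)}{2}$; the general case is the same linear algebra with the middle block present. Everything else — the reduction via Theorem~\ref{thm:franke}, the conjugacy classification of maximal $\Q$-parabolics, and the minimization — is routine.
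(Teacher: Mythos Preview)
Your proof is correct and follows essentially the same route as the paper: reduce to Franke's constant via Theorem~\ref{thm:franke}, classify maximal $\Q$-parabolics of $\Sp_{2n}$ as stabilizers $R_k$ of isotropic $k$-planes, compute $\dim N_k$ from the block form, and minimize at $k=1$. Your formula $k(2n-2k)+\tfrac{k(k+1)}{2}$ agrees with the paper's $2k(n-k)+k+\tfrac{k(k-1)}{2}$, and your observation that the top-right block is effectively \emph{symmetric} (in contrast to the skew-symmetric constraint in the orthogonal case) is exactly the point.
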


\begin{proof}[Proof of Proposition \ref{prop:franke-Sp}]
	The proof follows from Theorem \ref{thm:franke} similar to Proposition \ref{prop:franke-SO}. Let $e_1,\ldots,e_n$, $f_1,\ldots,f_n$ be the standard symplectic basis for $\R^{2n}$. Let $R_k$ be the maximal parabolic subgroup of $\Sp_{2n}$ defined as the stabilizer of $W=\R\{e_1,\ldots,e_k\}$ for $1\le k\le n$. Working in the basis $e_1,\ldots,e_k, e_{k+1},\ldots,e_n, f_{k+1},\ldots,f_n, f_1,\ldots, f_k$, an element of the unipotent radical $N_k$ can be expressed as a block matrix 
	\[g=\left(\begin{array}{ccc}
	I_k&y&z\\
	0&I_{2n-2k}&x\\
	0&0&I_k
	\end{array}\right),\]
	where $y=x^tJ'$ and $z-z^t=y^tJ'y$ and $J'=J_{n-k}$. It follows that $\dim N_k=2k(n-k)+k+\fr{k(k-1)}{2}$. For $1\le k\le n$, this number is smallest when $k=1$. 
\end{proof}

\subsubsection{The tautological ring of $\ca A_g$}
Let $\ca A_g$ denote the moduli space of principally polarized abelian varieties. The tautological ring $R^*_{CH}(\ca A_g) \subset CH^*(\ca A_g;\Q)$ in the Chow ring is the subalgebra generated by the $\lambda$-classes $\lambda_i \in CH^{2i}(\ca A_g;\Q)$, the Chern classes of the Hodge bundle (the $2g$-dimensional vector bundle given at an abelian variety $X \in \ca A_g$ by the tangent space to its identity element). Van der Geer proved it has a $\Q$-basis given by the monomials $\lambda_1^{a_1} \lambda_2^{a_2} \cdots \lambda_{g-1}^{a_{g-1}}$ with $a_i \in \{0,1\}$ \cite[Theorem (1.5)]{vanderGeer} \cite[\S 4]{vanderGeerCohomology}. As for Chow groups, there is a tautological ring $R^*_H(\ca A_g) \subset H^*(\ca A_g;\Q)$ in rational cohomology defined as the subalgebra generated by the $\lambda$-classes. In the literature it is claimed van der Geer's computation also holds in cohomology, but no reference for this is known to the authors. We provide a proof below:

\begin{thm}The tautological ring $R^*_H(\ca A_g) \subset H^*(\ca A_g;\Q)$ has a $\Q$-basis given by the monomials $\lambda_1^{a_1} \lambda_2^{a_2} \cdots \lambda_{g-1}^{a_{g-1}}$ with $a_i \in \{0,1\}$.\end{thm}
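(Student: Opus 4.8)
The plan is to deduce the cohomological statement from van der Geer's theorem over the Chow ring by means of the cycle class map and a Gorenstein‑duality argument; the only genuinely new input will be that the top tautological class stays nonzero in rational cohomology. To begin with, the cycle class map $CH^*(\ca A_g;\Q)\to H^{2*}(\ca A_g;\Q)$ is a ring homomorphism sending $\la_i$ to $\la_i$, so it restricts to a surjection $R^*_{CH}(\ca A_g)\twoheadrightarrow R^*_H(\ca A_g)$. Since van der Geer's theorem makes the square‑free monomials $\la_1^{a_1}\cdots\la_{g-1}^{a_{g-1}}$ a basis of $R^*_{CH}(\ca A_g)$, they at least span $R^*_H(\ca A_g)$, and it remains only to show they are linearly independent in $H^*(\ca A_g;\Q)$.

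I would reduce this to a single non‑vanishing. Van der Geer's theorem also exhibits $A:=R^*_{CH}(\ca A_g)$ as a graded Gorenstein $\Q$‑algebra: it is concentrated in Chow‑degrees $\le\binom g2$, the top piece $A^{\binom g2}$ is one‑dimensional and spanned by $\la_1\la_2\cdots\la_{g-1}$, and multiplication $A^k\times A^{\binom g2-k}\to A^{\binom g2}$ is a perfect pairing. (If one prefers not to invoke the Gorenstein property as a black box, note that $A\cong H^*(X_u;\Q)/(\la_g)$, where $X_u=\Sp(g)/\U(g)$ is the Lagrangian Grassmannian, and a short computation on the square‑free basis shows the socle of this quotient is one‑dimensional.) Now for any surjection $\phi\colon A\to B$ of finite‑dimensional graded connected $\Q$‑algebras with $A$ Gorenstein of top degree $N$ and $A^{>N}=0$, $\phi$ is an isomorphism as soon as $\phi$ does not kill $A^N$: if $0\neq\xi\in\ker\phi$ were homogeneous of degree $k<N$, choose $\eta\in A^{N-k}$ with $\xi\eta\neq 0$ in $A^N$ by perfectness of the pairing, and then $\phi(\xi\eta)=\phi(\xi)\phi(\eta)=0$; but $\xi\eta\neq 0$ in the one‑dimensional $A^N$, on which $\phi$ is injective, a contradiction. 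Applied to the cycle class map, this shows it is enough to prove $\la_1\la_2\cdots\la_{g-1}\neq 0$ in $H^{2\binom g2}(\ca A_g;\Q)$.

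This non‑vanishing is the cohomological shadow of the non‑vanishing half of van der Geer's theorem, and I would establish it by his method. Pass to a smooth proper toroidal compactification $\overline{\ca A_g}$ (a Deligne--Mumford stack); the Hodge bundle extends over it, giving classes $\la_i\in H^{2i}(\overline{\ca A_g};\Q)$ that restrict to the $\la_i$ on $\ca A_g$. By the Hirzebruch--Mumford proportionality principle there is a nonzero constant $c_g$ (essentially Siegel's volume of $\ca A_g$) with $\int_{\overline{\ca A_g}}P(\la_1,\dots,\la_g)=c_g\int_{X_u}P(c_1,\dots,c_g)$ for every weighted‑homogeneous $P$ of degree $\binom{g+1}2$, the $c_i$ being the Chern classes of the tautological bundle on $X_u$; taking $P=c_1c_2\cdots c_g$, which represents the nonzero top class of $H^*(X_u;\Q)$, gives $\int_{\overline{\ca A_g}}\la_1\cdots\la_g\neq 0$ and hence $\la_1\cdots\la_{g-1}\neq 0$ in $H^{2\binom g2}(\overline{\ca A_g};\Q)$. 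To descend to the open stratum one uses that $\la_g=0$ on $\ca A_g$ (van der Geer, \cite{vanderGeer}), so $\la_g$ is supported on the boundary divisor; over the divisor of rank‑one degenerations the Hodge bundle splits off a trivial line summand over $\overline{\ca A_{g-1}}$, so the $\la_i$ with $i<g$ restrict from $\overline{\ca A_{g-1}}$, and a self‑intersection computation on the boundary reduces the non‑vanishing to genus $g-1$. One then inducts on $g$, the base case $g\le 2$ being covered by Borel's stability theorem (equivalently by Proposition~\ref{prop:franke-Sp}, whose injectivity range $*\le 2g-1$ contains the socle degree $2\binom g2$ exactly when $g\le 2$).

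The genuine obstacle is this last step: van der Geer carries it out in the Chow ring, and one must verify it survives passage to rational cohomology. It does, because its ingredients---proper pushforward along the toroidal compactification, the projection and self‑intersection formulas, and Hirzebruch--Mumford proportionality---all make sense in $H^*(-;\Q)$ of smooth proper Deligne--Mumford stacks and agree there with their Chow counterparts on the algebraic tautological classes involved. The passage to the compactification cannot be avoided: Franke's bound $*\le 2g-1$ for $\Sp_{2g}$ falls short of the socle degree $2\binom g2$ once $g\ge 3$, so the stable‑cohomology input by itself settles only $g\le 2$.
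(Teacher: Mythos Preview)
Your approach is genuinely different from the paper's. The paper identifies $R^*_H(\ca A_g)\otimes\C$ with the image of the Matsushima homomorphism for $\Sp_{2g}(\Z)$ and then invokes the Speh--Venkataramana computation (based on Franke) that the kernel of the Matsushima map is precisely the orthogonal complement of the ideal $(u_g)$ in $H^*(X_u;\Q)$; counting dimensions against van der Geer's Chow basis finishes the argument. Your Gorenstein reduction is clean and correct: since $R^*_{CH}(\ca A_g)$ is a Poincar\'e-duality algebra with one-dimensional socle spanned by $\la_1\cdots\la_{g-1}$, the cycle class map is an isomorphism onto $R^*_H(\ca A_g)$ iff that single class survives in cohomology.

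The gap is in your proof of that survival. What you sketch does not actually show $\la_1\cdots\la_{g-1}\neq 0$ in $H^*(\ca A_g;\Q)$; it shows things on the compactification $\overline{\ca A_g}$ or on $\overline{\ca A_{g-1}}$. Concretely: Hirzebruch--Mumford gives $\int_{\overline{\ca A_g}}\la_1\cdots\la_g\neq 0$, and the projection-formula manipulation you describe, writing $\la_g$ as a Gysin pushforward from the boundary and pushing down to $\overline{\ca A_{g-1}}$, yields an identity of numbers relating proportionality constants in adjacent genera. None of this produces a witness that $\la_1\cdots\la_{g-1}$ is nonzero on the \emph{open} locus $\ca A_g$. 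Your assertion that the Hodge bundle splits off a trivial line summand along the boundary holds only on the open rank-one stratum, not on the full normal-crossing boundary, so the would-be contradiction argument ``$\la_1\cdots\la_{g-1}=i_*\be$ implies $\la_1\cdots\la_g=i_*(\be\cdot i^*\la_g)=0$'' breaks down: one must work stratum by stratum, and the relevant long exact sequence in cohomology does not give the same short localization sequence that van der Geer uses in Chow. This is exactly the point the authors flag when they say the cohomological statement is ``claimed in the literature'' without reference; your closing paragraph asserts the transfer goes through but does not supply the missing stratified analysis. The paper's route via Franke bypasses this entirely by computing the image of the Matsushima map directly, with no compactification needed.
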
 

\begin{proof}$R^*_{CH}(\ca A_g)$ surjects onto $R^*_H(\ca A_g)$, so it suffices to prove they have the same dimension. 
	
The space $\ca A_g$ is the quotient of the contractible Siegel upper half space $\mathbb{H}_g$ by $\Sp_{2g}(\Z)$. This action has finite stabilizers, so there is an isomorphism $H^*(\ca A_g;\Q) \cong H^*(\Sp_{2g}(\Z);\Q)$. Under the isomorphism  $H^*(\ca A_g;\C) \cong H^*(\Sp_{2g}(\Z);\C)$, $R^*_H(\ca A_g) \otimes \C$ is exactly the image of the Matsushima homomorphism \cite[\S 10]{vanderGeerCohomology}.  In \cite[Section 4]{speh-venkataramana}, Speh and Venkataramana prove that the kernel of the Matsushima homomorphism is the orthogonal complement of the ideal $(u_g)$ in
\[H^*(X_u;\Q) \cong \frac{\Q[u_1,\cdots,u_g]}{((1+u_1+u_2+\cdots+u_g)(1-u_1+u_2-\cdots+(-1)^g u_g)-1)}.\]
This is \cite[Lemma 8]{speh-venkataramana}, combined with the description of $H^*(X_u;\Q)$ in \cite[\S 1]{vanderGeer}. The latter also proves there is an isomorphism $H^*(X_u;\Q) \cong R^*_{CH}(\ca A_{g+1})$ identifying $u_i$ with $\lambda_i$. In particular, from the basis given above we see that the kernel of the Matsushima homomorphism is spanned by the monomials $u_1^{\epsilon_1} u_2^{\epsilon_2} \cdots u_{g-1}^{\epsilon_{g-1}} u_g$ with $\epsilon_i \in \{0,1\}$. Thus the image of the Matsushima homomorphism has the same dimension as $R^*_{CH}(\ca A_g)$, and the result follows.\end{proof}

Observe this result in particular describes the image of the Matsushima homomorphism in $H^*(B\Ga;\C)$ for finite-index subgroups $\Ga \subset \Sp_{2g}(\Z)$.

\subsection{Special linear groups} Finally, we specialize Theorem \ref{thm:franke} to finite-index subgroups of special linear groups. Now we have $G = \SL_n$ and $X_u = \SU(n)/\SO(n)$, whose cohomology in the range below is the exterior algebra on generators $\bar{c}_3,\bar{c}_5,\bar{c}_7,\ldots$ with $|\bar{c}_i| = 2i-1$.

\begin{prop}\label{thm:franke-SL}
	For any finite-index subgroup $\Ga \leq \SL_n(\Z)$ the Matsushima homomorphism $H^*(X_u;\C)\ra H^*(B\Ga;\C)$ is injective in degrees $*\le n-1$.
\end{prop}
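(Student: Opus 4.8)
The plan is to mimic the proofs of Propositions \ref{prop:franke-SO} and \ref{prop:franke-Sp}: invoke Theorem \ref{thm:franke}, which reduces the claim to the computation $\min_R \dim N_R = n-1$, where $R$ ranges over maximal $\Q$-parabolic subgroups of $\SL_n$ and $N_R$ is the unipotent radical. For $\SL_n$, the parabolic subgroups over $\Q$ are (up to conjugacy) the stabilizers of flags of coordinate subspaces, so a maximal parabolic $R_k$ is the stabilizer of a single subspace $W = \R\{e_1,\dots,e_k\}$ for $1 \le k \le n-1$. Its unipotent radical $N_k$ consists of block upper-triangular matrices of the form $\left(\begin{smallmatrix} I_k & x \\ 0 & I_{n-k}\end{smallmatrix}\right)$ with $x$ an arbitrary $k \times (n-k)$ matrix, so $\dim N_k = k(n-k)$. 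For $1 \le k \le n-1$ this is minimized at $k = 1$ (equivalently $k = n-1$), giving $\dim N_1 = n-1$, as claimed.

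First I would recall that parabolic subgroups of $\SL_n$ over $\Q$ correspond to partial flags in $\Q^n$, with maximal parabolics corresponding to a single proper nonzero subspace, and that all such subspaces are $\SL_n(\Q)$-conjugate to a coordinate subspace of the same dimension; hence every maximal $\Q$-parabolic is conjugate to the stabilizer $R_k$ of $W = \Q\{e_1,\dots,e_k\}$. Next I would identify $N_k$ explicitly: an element of $R_k$ preserving $0 \subset W \subset \Q^n$ and acting trivially on $W$ and on $\Q^n/W$ has the stated block form, and the map $N_k \ni g \mapsto x$ is an isomorphism onto $\Mat_{k \times (n-k)}$. Therefore $\dim N_k = k(n-k)$, and since $k(n-k) \ge n-1$ for all $1 \le k \le n-1$ with equality exactly at the endpoints, $\min_R \dim N_R = n-1$. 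Feeding this into Theorem \ref{thm:franke} gives injectivity of \eqref{eqn:matsushima}, i.e.\ of $H^*(X_u;\C) \to H^*(B\Ga;\C)$, in degrees $* \le n-1$.

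This proof is essentially bookkeeping, and there is no serious obstacle — the only place that requires a moment's care is confirming that over $\Q$ (as opposed to over a general number field) the minimal parabolic is a Borel, so that the maximal parabolics are exactly the $R_k$ above with $\dim A = 1$; this is automatic for $\SL_n$ since it is split over $\Q$. One might also wish to remark, in parallel with the $\ca A_g$ discussion, that this describes part of the image of the Matsushima homomorphism in $H^*(B\Ga;\C)$ for finite-index $\Ga \subset \SL_n(\Z)$, though unlike the symplectic case we do not pin down the kernel here.

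\begin{proof}[Proof of Proposition \ref{thm:franke-SL}]
By Theorem \ref{thm:franke} and the discussion following it, it suffices to show that
\[\min_R \dim N_R = n-1,\]
where $R$ ranges over maximal parabolic subgroups of $\SL_n$ over $\Q$ and $N_R$ is the unipotent radical. Since $\SL_n$ is split over $\Q$, its minimal $\Q$-parabolic is a Borel subgroup, and parabolic subgroups over $\Q$ are (up to conjugacy) stabilizers of partial flags of coordinate subspaces of $\Q^n$; a maximal parabolic is the stabilizer of a single proper nonzero subspace. Thus every maximal parabolic is conjugate to the subgroup $R_k < \SL_n$ stabilizing $W = \Q\{e_1,\ldots,e_k\}$ for some $1 \le k \le n-1$, where $e_1,\ldots,e_n$ is the standard basis of $\Q^n$.

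Fix $1 \le k \le n-1$. An element of $R_k$ preserves the flag $0 \subset W \subset \Q^n$, and the unipotent radical $N_k \subset R_k$ is the subgroup acting trivially on the quotients $W/0$ and $\Q^n/W$. Working in the ordered basis $e_1,\ldots,e_k,e_{k+1},\ldots,e_n$, such an element has the block form
\[g = \left(\begin{array}{cc} I_k & x \\ 0 & I_{n-k}\end{array}\right),\]
with $x$ an arbitrary $k \times (n-k)$ matrix; the assignment $g \mapsto x$ is an isomorphism of $N_k$ onto $\Mat_{k \times (n-k)}(\Q)$. Hence $\dim N_k = k(n-k)$. For $1 \le k \le n-1$ the integer $k(n-k)$ is minimized precisely at $k = 1$ and $k = n-1$, where it equals $n-1$. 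Therefore $\min_R \dim N_R = n-1$, and the conclusion follows from Theorem \ref{thm:franke}.
\end{proof}
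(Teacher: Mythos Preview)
Your proof is correct and follows exactly the approach the paper takes: identify the maximal $\Q$-parabolics of $\SL_n$ as stabilizers $R_k$ of a $k$-dimensional subspace, compute $\dim N_k = k(n-k)$, observe the minimum $n-1$ occurs at $k=1$ (or $k=n-1$), and apply Theorem \ref{thm:franke}. The paper's own argument is only a one-sentence sketch of this, so your write-up is in fact more detailed than what appears there.
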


The proof is similar to the proof of Propositions \ref{prop:franke-SO} and \ref{prop:franke-Sp}, but simpler; one identifies the maximal parabolic subgroups over $\Q$ as the stabilizers of a non-trivial subspace $W$ and observes that the stabilizers of 1-dimensional subspaces have the smallest unipotent radical, of dimension $n-1$.

\subsubsection{A result announced by Lee} In \cite[Theorem 1]{Lee}, Lee announced a result which in particular implies that the range in Proposition \ref{thm:franke-SL} can be improved to $* \le 2n-3$. His result can be deduced from page 61 of \cite{franke}, where Franke describes the kernel of the Matsushima homomorphism for finite index subgroups of $\SL_n(\ca O_K)$, with $\ca O_K$ the ring of integers in a number field $K$:

\begin{thm}\label{thm:sln-lee} For any finite-index subgroup $\Ga \leq \SL_n(\Z)$, the image of the Matsushima homomorphism $H^*(X_u;\C)\ra H^*(B\Ga;\C)$ is an exterior algebra on the classes $\bar{c}_3,\cdots,\bar{c}_{n-1}$ with $|\bar{c}_i| = 2i-1$ when $n$ is odd, and an exterior algebra on the classes $\bar{c}_3,\cdots,\bar{c}_{n-3}$ when $n$ is even.\end{thm}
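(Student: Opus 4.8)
The plan is to combine Franke's description of $\ker(\Psi')$ (equivalently of $\ker$ of the Matsushima homomorphism on $H^*(X_u;\C)$) from page 61 of \cite{franke} with the explicit structure of $H^*(X_u;\C)$ for $X_u = \SU(n)/\SO(n)$. Recall from the proof of Proposition \ref{prop:adelic-injective} that $\ker(\mu) = \ker(\Phi)^\perp$, where $\Phi$ is induced by the inclusions of the compact dual symmetric spaces $X_{M_R} \hookrightarrow X_u$ for $R = M_R A_R N_R$ ranging over the maximal $\Q$-parabolics. For $G = \SL_n$, the maximal parabolics are the stabilizers of a single nontrivial subspace $W \subset \Q^n$, say of dimension $k$ with $1 \le k \le n-1$; the Levi $M_R$ is (up to center and $\R$-points) essentially $\SL_k \times \SL_{n-k}$, and correspondingly $X_{M_R}$ is the product $(\SU(k)/\SO(k)) \times (\SU(n-k)/\SO(n-k))$ sitting inside $X_u = \SU(n)/\SO(n)$. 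First I would use the fact, recorded just before the theorem, that $H^*(X_u;\C)$ is the exterior algebra $\Lambda_\C(\bar c_3,\bar c_5,\bar c_7,\ldots)$ with $|\bar c_{2j+1}| = 4j+2$ (finitely many generators, the largest being $\bar c_{n-1}$ for $n$ odd and $\bar c_{n-2}$... — more precisely the generators $\bar c_i$ for odd $i$ with $3\le i\le n$ if $n$ odd, $i \le n-1$ if $n$ even).

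Next I would compute the restriction map $H^*(X_u;\C) \to H^*(X_{M_R};\C)$ for each $k$: on the exterior generators $\bar c_i$ it is the standard map sending $\bar c_i$ to the sum of the corresponding classes on the two factors (this is the same formula as for the Borel classes of $\SU(n)$ restricted to $\SU(k)\times\SU(n-k)$, passed to the real form). From this one reads off which generators and products of generators lie in $\ker(\Phi) = \bigcap_R \ker\big(H^*(X_u;\C)\to H^*(X_{M_R};\C)\big)$. The key numerical input is that for the stabilizer of a line ($k=1$), $X_{M_R} = \SU(n-1)/\SO(n-1)$, whose top generator is $\bar c_{n-1}$ (if $n$ even, i.e.\ $n-1$ odd) or $\bar c_{n-2}$ (if $n$ odd): so $\bar c_{n-1}$ restricts to zero along the $k=1$ parabolic when $n$ is odd, and $\bar c_{n-3}$ along... — in any case one finds that the \emph{last} exterior generator surviving in the intersection is $\bar c_{n-1}$ for $n$ odd and $\bar c_{n-3}$ for $n$ even, exactly matching the statement. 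I would then dualize: since $\ker(\mu) = \ker(\Phi)^\perp$ and $H^*(X_u;\C)$ with cup product is a Frobenius algebra (Poincaré duality on the compact manifold $X_u$), the image of $\mu$ is the annihilator of $\ker(\Phi)$ under the pairing, and in an exterior algebra the annihilator of an ideal generated by the ``high'' generators is the subalgebra generated by the complementary ``low'' generators. This yields that $\im(\mu)$ is the exterior subalgebra on $\bar c_3,\ldots,\bar c_{n-1}$ ($n$ odd) resp.\ $\bar c_3,\ldots,\bar c_{n-3}$ ($n$ even).

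The main obstacle I expect is bookkeeping the interaction between the several maximal parabolics rather than just $k=1$: one must check that no product $\bar c_{i_1}\cdots\bar c_{i_r}$ that survives restriction to $X_{M_{R_1}}$ fails to survive restriction to some other $X_{M_{R_k}}$, so that $\ker(\Phi)$ is precisely the ideal one expects. Concretely this reduces to a branching/degree count for the restriction of exterior generators to products, and to verifying that Franke's $\ker(\Psi')$ on page 61 of \cite{franke} is stated for $\SL_n(\mathcal O_K)$ in a form from which the $\mathcal O_K = \Z$ case reads off as above — the latter is exactly the translation Franke already carries out, and the former is a standard Künneth/Borel-class computation. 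A secondary, purely cosmetic, point is to align the indexing conventions (Franke's $\bar c_i$ versus the Borel generators $x_{4j+1}$ of $H^*(\SU(n);\C)$ and their images in $H^*(X_u;\C)$), and to note that the announced range $* \le 2n-3$ in Proposition \ref{thm:franke-SL} is recovered since $|\bar c_{n-1}| = 2(n-1)-1 = 2n-3$ (for $n$ odd) and $|\bar c_{n-3}| = 2n-7 < 2n-3$ (for $n$ even), consistent with injectivity through degree $2n-3$ being the best possible from this description.
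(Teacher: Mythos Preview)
The paper's proof is a one-liner: it simply quotes Franke's explicit description (page 61 of \cite{franke}) of the \emph{kernel} of the Matsushima homomorphism for $\SL_n$ over $\Q$ --- the ideal $(\bar c_n)$ when $n$ is odd, and the ideal $(\bar c_{n-1},e)$ when $n$ is even, where $e\in H^n(X_u;\C)$ is the Euler class --- and reads off the quotient. You instead try to re-derive Franke's result by computing $\ker(\Phi)=\bigcap_R\ker\big(H^*(X_u)\to H^*(X_{M_R})\big)$ and then dualizing via $\ker(\mu)=\ker(\Phi)^\perp$. That is a legitimate strategy (it is essentially what Franke does in the general case), but your sketch has two genuine gaps.

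First, for $n$ even you omit the Euler class: $H^*(X_u;\C)=\Lambda(\bar c_3,\bar c_5,\ldots,\bar c_{n-1},e)$ with $|e|=n$, and the kernel of $\mu$ is generated by \emph{both} $\bar c_{n-1}$ and $e$. Your restriction analysis, carried out only for the $\bar c_i$, cannot see this extra generator, so in the even case you would not obtain the correct image $\Lambda(\bar c_3,\ldots,\bar c_{n-3})$. Second, your dualization step is incorrect as stated. You assert that ``the image of $\mu$ is the annihilator of $\ker(\Phi)$'' and that ``the annihilator of an ideal generated by the high generators is the subalgebra generated by the complementary low generators.'' Neither holds: the image of $\mu$ is the \emph{quotient} $H^*(X_u)/\ker(\Phi)^\perp$, not a subspace; and in $\Lambda(x_1,\ldots,x_m)$ with the Poincar\'e pairing, the orthogonal complement of the ideal $(x_{k+1},\ldots,x_m)$ is the \emph{principal} ideal $(x_{k+1}\cdots x_m)$, not the subalgebra $\Lambda(x_1,\ldots,x_k)$. (Concretely, for $n$ odd one has $\ker(\mu)=(\bar c_n)$, which is self-orthogonal, so $\ker(\Phi)=(\bar c_n)$ as well; this is not the ``subalgebra on the low generators'' your sketch predicts.) The indexing in your ``key numerical input'' paragraph is also internally inconsistent --- for $n$ odd there is no class $\bar c_{n-1}$, and a class that ``restricts to zero'' along a parabolic lies in the kernel rather than ``surviving.'' The fix is simply to cite Franke's stated kernel directly, as the paper does.
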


\begin{proof}The cohomology of compact dual $X_u$ for $\SL_n(\Z)$ is given by the following exterior algebras:
	\[H^*(X_u;\Q) = \begin{cases} \Lambda(\bar{c}_3,\cdots,\bar{c}_{n}) & \text{if $n$ is odd}\\
	\Lambda(\bar{c}_3,\cdots,\bar{c}_{n-1},e) & \text{if $n$ is even,}
	\end{cases} \]
	with $|\bar{c}_i| = 2i-1$ and $|e| = n$. According to page 61 of \cite{franke}, when $n$ is odd the kernel of Matsushima homomorphism is the ideal generated by $\bar{c}_n$, and when $n$ is even it is the ideal generated by $\bar{c}_{n-1}$ and $e$.
\end{proof}

\begin{rmk}Theorem \ref{thm:sln-lee} resolves a question in \cite[Remark 7.5]{elbaz-vincent}; the Borel class $\bar{c}_3$ is non-zero in $H^5(B\SL_n(\Z);\Q)$ for $n \geq 5$, and the Borel class $\bar{c}_5$ is non-zero in $H^9(B\SL_n(\Z);Q)$ for $n \geq 7$. Similarly $\bar{c}_3\bar{c}_5$ is non-zero in $H^{14}(B\SL_n(\Z);\Q)$ for $n \geq 7$. Curiously, the non-zero class they find in $H^9(B\SL_6(\Z);\Q)$ is \emph{not} stable.\end{rmk}

\section{Moduli of Einstein metrics} To apply our knowledge of the cohomology of arithmetic groups, we use the global Torelli theorem to study the moduli space $\ca M_{\Ein}$ of Einstein metrics on the $K3$ manifold. Following \cite[\S 4]{giansiracusa}, for us this shall mean the homotopy quotient 
\[\ca M_{\Ein} \coloneqq \ca T_{\Ein} \sslash \Ga_{\Ein}\]
of a moduli space $\ca T_{\Ein}$ of marked Einstein metrics by the subgroup $\Ga_{\Ein} \leq \Ga_K$. The space $\ca T_{\Ein}$ admits a description as a hyperplane complement, but we only use a pair of consequences of this.

Fix a finite-index subgroup $\Ga' \leq \Ga_K$, and assume $\Ga'$ is contained in $\Ga_{\Ein}$. Equivalently, one may assume it is contained in the identity component of $\OO(3,19)$. We introduce the notation $\Mod_{\Ein} \coloneqq \alpha^{-1}(\Ga_{\Ein})$ and $\Mod' \coloneqq \alpha^{-1}(\Ga')$.

\begin{prop}\label{prop:torelli} The homomorphism $H^*(B\Ga';\C)\ra H^*(B\Mod';\C)$ is injective for any $\Ga' \leq  \Ga_K$.
\end{prop}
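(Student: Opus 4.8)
The plan is to exploit the global Torelli theorem, which is exactly what identifies $\ca M_{\Ein}$ as a hyperplane complement quotient by $\Ga_{\Ein}$. The map $B\Mod' \to B\Ga'$ classifies (after pulling back) the universal family of Einstein metrics; equivalently, there is a map $\ca M_{\Ein}' \to B\Mod'$, where $\ca M_{\Ein}' \coloneqq \ca T_{\Ein}\sslash\Ga'$, whose composite with $B\Mod' \to B\Ga'$ is the natural map $\ca M_{\Ein}' = \ca T_{\Ein}\sslash\Ga' \to \ast\sslash\Ga' = B\Ga'$. So it suffices to show that $H^*(B\Ga';\C) \to H^*(\ca M_{\Ein}';\C)$ is injective, and for this it suffices that the map $\ca T_{\Ein}\sslash\Ga' \to B\Ga'$ admits a section up to homotopy, or more weakly that $H^*(\ca T_{\Ein};\C)$ has trivial reduced cohomology in the relevant range, so that the Serre spectral sequence of the fibration $\ca T_{\Ein} \to \ca T_{\Ein}\sslash\Ga' \to B\Ga'$ collapses onto the base row.

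Concretely, I would first recall the promised consequence of the hyperplane-complement description of $\ca T_{\Ein}$: namely that $\ca T_{\Ein}$ is connected and that the $\Ga'$-action extends over a contractible (or at least cohomologically trivial) space — in the $K3$ setting $\ca T_{\Ein}$ sits inside the period domain $\ca D = \OO(3,19)/(\SO(2)\times\OO(1,19))$-type space minus a locally finite union of hyperplanes $\bigcup_{\delta} \delta^\perp$ over $(-2)$-classes $\delta$, and the inclusion into the full period domain (which is contractible, being $\simeq Y_\infty$) is highly connected. Second, I would feed this into the Serre spectral sequence for $\ca T_{\Ein} \to \ca T_{\Ein}\sslash\Ga' \to B\Ga'$ with $\C$-coefficients: the $E_2$-page is $H^p(B\Ga'; H^q(\ca T_{\Ein};\C))$, and since $\ca T_{\Ein}$ is connected this contains $H^p(B\Ga';\C)$ as the $q=0$ row, which survives to $E_\infty$ and gives the desired injection on $H^*(B\Ga';\C)$ — provided we know the $q=0$ row cannot be hit by differentials, which is automatic as it is the bottom row, so no positive-degree classes of the base can die. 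Thus the edge homomorphism $H^*(B\Ga';\C) \to H^*(\ca T_{\Ein}\sslash\Ga';\C)$ is injective with no hypotheses on higher cohomology of $\ca T_{\Ein}$ at all. Finally, I would factor the map $B\Mod' \to B\Ga'$ through $\ca T_{\Ein}\sslash\Ga' = \ca M_{\Ein}'$ using the Einstein-metric construction of \cite{giansiracusa} (a marked Einstein metric on $K$ gives a point of $\ca T_{\Ein}$ together with a diffeomorphism, hence a map to $B\Mod'$), so that $H^*(B\Ga';\C) \to H^*(B\Mod';\C)$ is a retract of, or at least a factor of, the injection $H^*(B\Ga';\C)\hookrightarrow H^*(\ca M_{\Ein}';\C)$, and therefore injective.

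The main obstacle I anticipate is making precise the factorization $B\Mod' \to \ca M_{\Ein}' \to B\Ga'$: one must check that the moduli space of marked Einstein metrics genuinely maps to $B\Mod'$ compatibly with the period map to $B\Ga'$, i.e.\ that the Teichmüller-style space $\ca T_{\Ein}$ carries a $\Diff(K)$-equivariant family in a way that descends correctly — this is where the global Torelli theorem for $K3$ (in the $C^\infty$/marked form, as used in \cite{giansiracusa}) does the real work, and where the "pair of consequences" of the hyperplane-complement description must be invoked carefully. A secondary subtlety is that $\ca M_{\Ein}$ is only a homotopy quotient and $\Ga'$ may have torsion, but since we work with $\C$-coefficients and $\Ga'$ acts on $\ca T_{\Ein}$ properly, the Borel construction computes the right thing and the spectral sequence argument is unaffected. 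I do not expect any delicate convergence issues, since only the bottom row of the spectral sequence is needed and it is never the target of a differential.
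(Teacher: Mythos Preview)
Your spectral sequence argument contains a genuine error. In the cohomological Serre spectral sequence for a fibration $F \to E \to B$, the differentials run $d_r \colon E_r^{p,q} \to E_r^{p+r,\,q-r+1}$, so they \emph{decrease} $q$. The bottom row $E_r^{*,0}$ is therefore precisely the row that \emph{receives} differentials; what is automatic is only that no differential \emph{leaves} it. The edge homomorphism $H^*(B\Ga';\C) = E_2^{*,0} \twoheadrightarrow E_\infty^{*,0} \hookrightarrow H^*(\ca M_{\Ein}';\C)$ is thus not injective unless every incoming differential vanishes. Since $\ca T_{\Ein}$ is a hyperplane complement in the period domain, it has nontrivial $H^2$, and you have given no reason why the resulting transgressions $d_3 \colon E_3^{p,2} \to E_3^{p+3,0}$ (and higher) should be zero. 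Indeed, the very next proposition in the paper analyses exactly these differentials and has to invoke a vanishing result for $H^1$ of the stabilizer groups to control them in degree $4$; the injectivity is not formal.

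The paper's proof avoids this issue entirely. Rather than comparing cohomology of $B\Ga'$ and $\ca M_{\Ein}'$, it uses only the fundamental group: the global Torelli theorem says that $\pi_1(\ca M_{\Ein}) \to \Ga_K$ is an isomorphism onto $\Ga_{\Ein}$, and since this factors through $\Mod(K)$, one obtains a group-theoretic section $\Ga_{\Ein} \hookrightarrow \Mod(K)$ of $\alpha$. Restricting to $\Ga' \le \Ga_{\Ein}$ gives a section $\Ga' \to \Mod'$, so $H^*(B\Ga';\C) \to H^*(B\Mod';\C)$ is split injective; the remaining case $\Ga' \not\le \Ga_{\Ein}$ is handled by passing to the index-$2$ subgroup $\Ga' \cap \Ga_{\Ein}$ and taking $\Z/2$-invariants. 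Your factorization $\ca M_{\Ein}' \to B\Mod' \to B\Ga'$ is correct and is exactly what produces this section on $\pi_1$; the point you are missing is that one should apply $\pi_1$ to it, not $H^*$.
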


\begin{proof}We will first prove that the surjection $\Mod(K)\ra\Ga_K$ splits over $\Ga_{\Ein}$ by Giansiracusa's work: there is a map 
	\begin{equation}\label{eqn:torelli}
	e \colon \ca M_{\Ein}\lra B\Diff(K)\lra B\Mod(K)\lra B\Ga_K.\end{equation} The induced homomorphism $\pi_1(\ca M_{\Ein})\ra\Ga_K$ is injective with image $\Ga_{\Ein}$ by the global Torelli theorem \cite[\S 4-5]{giansiracusa}. Thus, $\Mod(K)\ra\Ga_K$ splits over $\Ga_{\Ein}$. This proves the case $\Ga' = \Ga_{\Ein}$; for $\Ga' \subset \Ga_{\Ein}$ one restricts the splitting to $\Ga'$. 
	
	If $\Ga' \not \subset \Ga' \cap \Ga_{\Ein}$, then $\Ga' \cap \Ga_{\Ein}$ has index 2 in $\Ga'$ and similarly $\Mod' \cap \Mod_{\Ein}$ has index 2 in $\Mod'$. Thus the injective homomorphism $H^*(B(\Ga' \cap \Ga_{\Ein});\C) \to H^*(B(\Mod' \cap \Mod_{\Ein});\C)$ is one of representations of $\Z/2 \cong \Ga'/(\Ga' \cap \Ga_{\Ein}) = \Mod'/(\Mod' \cap \Mod_{\Ein})$, and we can identify $H^*(B\Ga';\C)\ra H^*(B\Mod';\C)$ with the induced map on $\Z/2$-invariants. As taking $\Z/2$-invariants preserves injective maps, the proposition follows.\end{proof} 

To prove Theorem \ref{thm:hirzebruch} we must prove that $p^* x_4 \neq 0\in H^4(B\Diff(K);\Q)$. To do so, it suffices to prove that is non-zero when pulled back to $\ca M_{\Ein}$:

\begin{prop}For the map $e$ defined in $(\ref{eqn:torelli})$, $e^* x_4 \neq 0 \in H^4(\ca M_{\Ein};\Q)$.\end{prop}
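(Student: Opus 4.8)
The plan is to factor $e$ through $B\Ga_{\Ein}$, use Proposition~\ref{prop:frankeEin} to see that $x_4$ is already nonzero in $H^4(B\Ga_{\Ein};\C)$, and then check that this nonvanishing survives pullback to $\ca M_{\Ein}$ because $\ca M_{\Ein}$ differs from $B\Ga_{\Ein}$ only in codimension $3$. We work with $\C$ coefficients throughout, which suffices since $x_4$ is rational.

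For the factorization: $\ca T_{\Ein}$ is the complement, inside the contractible symmetric space $Y_\infty$ of $\SO_0(3,19)$ (the space of oriented positive-definite $3$-planes in $M\otimes\R$), of a locally finite family of totally geodesic codimension-$3$ submanifolds $H_\delta=\{P : P\perp\delta\}$, one for each $(-2)$-root $\delta$ of $M$; in particular $\ca T_{\Ein}$ is simply connected, so $\pi_1(\ca M_{\Ein})=\Ga_{\Ein}$ and the tautological map $\bar e\colon\ca M_{\Ein}=\ca T_{\Ein}\sslash\Ga_{\Ein}\to B\Ga_{\Ein}$ is a $\pi_1$-isomorphism. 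Since $B\Ga_K$ and $B\Ga_{\Ein}$ are aspherical, $e$ is homotopic to $\ca M_{\Ein}\xrightarrow{\bar e}B\Ga_{\Ein}\to B\Ga_K$, so $e^*x_4=\bar e^*\bigl(x_4|_{B\Ga_{\Ein}}\bigr)$. By the discussion in Section~\ref{sec:mmm}, $x_4|_{B\Ga_{\Ein}}$ is a nonzero rational multiple of the image of $p_1\in H^4(B\SO(3);\Q)$ under $H^4(B\SO(3);\C)\to H^4(X_u;\C)\to H^4(B\Ga_{\Ein};\C)$, which is injective by Proposition~\ref{prop:frankeEin}; hence $x_4|_{B\Ga_{\Ein}}\neq 0$ in $H^4(B\Ga_{\Ein};\C)$.

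So everything reduces to showing $\bar e^*\colon H^4(B\Ga_{\Ein};\C)\to H^4(\ca M_{\Ein};\C)$ is injective. Over $\C$ one may replace $B\Ga_{\Ein}$ by the orbifold $Y_\infty/\Ga_{\Ein}$ and $\ca M_{\Ein}$ by $\ca T_{\Ein}/\Ga_{\Ein}$, so that $\ca M_{\Ein}$ is obtained from $B\Ga_{\Ein}$ by deleting the image $Z$ of $\bigcup_\delta H_\delta$. The key structural input is that the stabilizer $\Ga_\delta$ of (the hyperplane, equivalently line, through) a root $\delta$ maps onto an arithmetic lattice in $\SO(\delta^\perp)\cong\SO(3,18)$: thus the codimension-$3$ part of $Z$ is a disjoint union, over $\Ga_{\Ein}$-orbits of roots, of spaces with the $\C$-cohomology of $B\Ga_\delta$, while the self-intersections of $Z$ have codimension $\geq 6$. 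I would then delete first the codimension-$\geq 6$ locus, which changes nothing on $H^{\leq 5}$, and then the remaining honest codimension-$3$ submanifold $Z'$; the Gysin sequence of that deletion gives an exact piece $H^1(Z';\ell)\to H^4(B\Ga_{\Ein};\C)\xrightarrow{\bar e^*}H^4(\ca M_{\Ein};\C)$, where $\ell$ is the orientation local system of the normal bundle, a character of order at most $2$. Now $H^1(Z';\ell)\cong\bigoplus_{\text{orbits}}H^1(\Ga_\delta;\ell)$, and each summand vanishes because $\Ga_\delta$ is an arithmetic lattice in the higher-rank simple group $\SO(3,18)$, hence has Kazhdan's property~(T), so $H^1(\Ga_\delta;-)$ kills every finite-dimensional unitary representation (alternatively, pass to the index-$\leq 2$ subgroup trivializing $\ell$ and apply Borel's vanishing theorem). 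Therefore $\ker\bar e^*=0$ in degree $4$, and combining with the previous paragraph, $e^*x_4=\bar e^*(x_4|_{B\Ga_{\Ein}})\neq 0$ in $H^4(\ca M_{\Ein};\C)$, hence in $H^4(\ca M_{\Ein};\Q)$.

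The hard part is organizing the third step so as to cope with the singularities of $Z$; deleting the $\Ga_{\Ein}$-orbits of hyperplanes in order of decreasing codimension of their intersection loci reduces it at each stage to the Gysin sequence of an honest codimension-$3$ sub-orbifold, the only arithmetic input being the vanishing of $H^1$ with finite-order unitary coefficients for lattices in $\SO(3,18)$. Equivalently, one may run the Serre spectral sequence of $\ca T_{\Ein}\to\ca M_{\Ein}\to B\Ga_{\Ein}$: a short computation from the hyperplane-complement description gives $H^1(\ca T_{\Ein};\C)=H^3(\ca T_{\Ein};\C)=0$ while $H^2(\ca T_{\Ein};\C)$ is the permutation $\Ga_{\Ein}$-module on the set of $(-2)$-roots, so the only differential with target in bidegree $(4,0)$ has source $H^1\bigl(\Ga_{\Ein};H^2(\ca T_{\Ein};\C)\bigr)$, which vanishes since $\Ga_{\Ein}$ has property~(T) and acts on the roots with infinite orbits; again $\bar e^*$ is injective on $H^4$.
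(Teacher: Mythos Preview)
Your argument is correct and is essentially the paper's own proof: both reduce to showing that the edge map $H^4(B\Ga_{\Ein};\Q)\to H^4(\ca M_{\Ein};\Q)$ in the Serre spectral sequence of $\ca T_{\Ein}\to\ca M_{\Ein}\to B\Ga_{\Ein}$ is injective, and both do so by checking that the only possible incoming differential, from $E_2^{1,2}$, vanishes because the stabilizers of the codimension-$3$ strata are arithmetic lattices in higher-rank orthogonal groups and hence have trivial $H^1$. The paper cites Margulis for this vanishing; you use property~(T), which is equivalent here. Your additional Gysin argument is a pleasant alternative packaging of the same computation.

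One small slip worth fixing: in your last sentence, the vanishing of $H^1\bigl(\Ga_{\Ein};H^2(\ca T_{\Ein};\C)\bigr)$ does \emph{not} follow from property~(T) of $\Ga_{\Ein}$ together with infiniteness of the orbits. A property~(T) group can have nonzero $H^1$ with coefficients in a permutation module whose point-stabilizers are not themselves property~(T) (e.g.\ $\SL_3(\Z)$ acting on $\SL_3(\Z)/\Z$ gives $H^1=\C$ by Shapiro). The correct justification is exactly the one you already gave in your Gysin paragraph: by Shapiro's lemma this $H^1$ decomposes as $\bigoplus_{\text{orbits}} H^1(\Ga_\delta;\C)$, and each summand vanishes because $\Ga_\delta$ is itself a higher-rank arithmetic lattice.
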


\begin{proof}We will prove that $e^* \colon H^4(B\Ga_K;\Q) \to H^4(\ca M_{\Ein},\Q)$ is injective. In \cite[\S 5]{giansiracusa}, one finds a description of the Serre spectral sequence for the fibration sequence
	\[\ca T_{\Ein} \lra \ca M_{\Ein} = \ca T_{\Ein} \sslash \Ga_{\Ein} \lra B\Ga_{\Ein}.\]
	Its $E^2$-page is given by
	\[E^2_{p,q} = \begin{cases} 0 & \text{if $q$ is odd,} \\
	\prod_{\sigma \in \Delta_{q/2}/\Ga_{\Ein}} H^p(B \Stab(\sigma);\Q) & \text{if $q$ is even.}\end{cases}\]
	The description of $\De_{q/2}/\Ga_{\Ein}$ is not important here, as we shall only use the rows $0 \leq q \leq 3$. Of these, the following are non-zero: for $q=0$ we get $H^p(B\Ga_K;\Q)$, and for $q=2$ we get a product of the cohomology groups of groups $\Ga$ commensurable with $\OO(2,19;\Z)$ or $\OO(3,18;\Z)$. For such groups $H^1(\Ga;\Q)$ vanishes \cite[Corollary 7.6.17]{margulis}, and thus there can not be any non-zero differential into the entry $E^2_{4,0}$.	
\end{proof}

\section{Nielsen realization}\label{sec:nielsen}
We now deduce Theorem \ref{thm:nielsen} from either Proposition \ref{prop:hirzebruchimprov} or \ref{prop:frankeEin}. The argument in fact shows that $\Diff(K) \ra \Mod(K)$ does not split over any finite index subgroup of $\Mod(K)$.

\begin{proof}[Proof of Theorem \ref{thm:nielsen}] We will show that $\Diff(K) \ra \Mod(K)$ does not split by contradiction, so we assume there is a splitting $s \colon \Mod(K) \to \Diff(K)$, which necessarily factors over the discrete group $\Diff(K)^\delta$ as 
	\[\Mod(K) \overset{s^\delta} \lra \Diff(K)^\delta \overset{p^\delta} \lra \Diff(K).\]
	
	Observe that $x_8 \in H^8(B\Ga_K;\Q)$ is non-zero; either one pulls back to $B\Diff(K)$ and uses Proposition \ref{prop:hirzebruchimprov} and Lemma \ref{lem:atiyah}, or one pulls back to $B\Gamma_{\Ein}$ and uses Proposition \ref{prop:frankeEin}. By Proposition \ref{prop:torelli} its pullback to $H^8(B\Mod(K);\Q)$, which we denote by $c$, is also non-zero. Its pullback under
\[B\Mod(K) \overset{s^\delta} \lra B\Diff(K)^\delta \overset{p^\delta} \lra B\Diff(K) \overset{p} \lra  B\Mod(K)\]
is $c$ and hence non-zero. By Section \ref{sec:mmm} we get $p^* c = \kappa_{\ca L_3}$ and we claim that $(p^\delta)^* \kappa_{\ca L_3} \in H^{8}(B\Diff(K)^\delta)$ vanishes. This would contradict $c \neq 0$ and finish the proof. To prove the claim, we use that $B\Diff(K)^\delta$ classifies flat $K$-bundles, i.e.~bundles with a foliation transverse to the fibers and of codimension 4. The normal bundle to this foliation is isomorphic to the vertical tangent bundle, and by the Bott vanishing theorem its Pontryagin ring vanishes in degrees $>8$ \cite{Bott}. In particular the class $\ca L_3$ of degree 12 vanishes.\end{proof}
	
	\begin{rmk}
		The idea of using Bott vanishing to obstruct Nielsen realization originates in Morita's work \cite[Thm.\ 8.1]{morita_nonlifting}.
	\end{rmk}

\bibliographystyle{alpha}
\bibliography{K3nielsen}

\end{document}